\documentclass[12pt]{amsart}
\usepackage{amsmath}
\usepackage{amsthm}
\usepackage{amssymb}
\usepackage{amscd}
\usepackage{amsfonts}
\usepackage{amsbsy}
\usepackage{graphicx}
\usepackage{tikz}

\textwidth 15.6cm
\textheight 22cm
\oddsidemargin 0cm
\evensidemargin 0cm
\topmargin 0cm
\newtheorem{theorem}{Theorem}
\newtheorem{remark}{Remark}
\newtheorem{proposition}{Proposition}
\newtheorem{lemma}{Lemma}
\newtheorem{corollary}{Corollary}
\newtheorem{definition}{Definition}

\parindent=0cm
\parskip=3mm

\def\ie{{\em i.e.,} }
\def\eg{{\em e.g.} }

\newfont\bbf{msbm10 at 12pt}

\def\phi{\varphi}
\def\R{{\mathbb R}}

\def\N{{\mathbb N}}
\def\Z{{\mathbb Z}}

\def\diam{\mbox{\rm diam}\,}

\def\theta{\vartheta}

\begin{document}

\title{Uncountably many planar embeddings of unimodal inverse limit spaces}

\author{Ana Anu\v{s}i\'c, Henk Bruin, Jernej \v{C}in\v{c}}
\address[A.\ Anu\v{s}i\'c]{Faculty of Electrical Engineering and Computing,
University of Zagreb,
Unska 3, 10000 Zagreb, Croatia}
\email{ana.anusic@fer.hr}
\address[H.\ Bruin]{Faculty of Mathematics,
University of Vienna,
Oskar-Morgenstern-Platz 1, A-1090 Vienna, Austria}
\email{henk.bruin@univie.ac.at}
\address[J.\ \v{C}in\v{c}]{Faculty of Mathematics,
University of Vienna,
Oskar-Morgenstern-Platz 1, A-1090 Vienna, Austria} 
\email{jernej.cinc@univie.ac.at}
\thanks{AA was supported in part by Croatian Science Foundation under the project IP-2014-09-2285.
HB and J\v{C} were supported by the FWF stand-alone project P25975-N25.
We gratefully acknowledge the support of the bilateral grant \emph{Strange Attractors and Inverse Limit Spaces},  \"Osterreichische
Austauschdienst (OeAD) - Ministry of Science, Education and Sport of the Republic of Croatia (MZOS), project number HR 03/2014.}
\date{\today}

\subjclass[2010]{37B45, 37E05, 54H20}
\keywords{unimodal map, inverse limit space, planar embeddings}

\begin{abstract}
For a point $x$ in the inverse limit space $X$ with a single unimodal bonding map 
we construct, with the use of symbolic dynamics, a planar embedding such 
that $x$ is accessible.
It follows that there are uncountably many non-equivalent planar
embeddings of $X$.
\end{abstract}

\maketitle

\section{Introduction}\label{sec:intro}

Inverse limit spaces can be often used as a model to construct attractors of some plane diffeomorphisms, see for example \cite{Wil1, Wil2, BaHo}. 
One of the simplest examples is the Knaster continuum (bucket handle), which is the attractor of Smale's horseshoe map and can be modeled as 
the inverse limit space of the full tent map $T_2(x):= \min \{2x, 2(1-x)\}$ for $x\in[0, 1]$, see \cite{Ba}. The topological unfolding of the Smale's horseshoe has been a topic of ongoing interest, related to the pruning front conjecture 
for the H\'{e}non family developed by Cvitanovi\'c et al. in \cite{Cv,CvGuPr} and demonstrated more recently with the work of Boyland, de Carvalho \& Hall  \cite{BoCaHa} and Mendoza \cite{Me}.

Inverse limit spaces with unimodal bonding maps
$T:[0,1] \to [0,1]$ (from now on denoted by $X$) are chainable.
The study of embeddings of chainable continua dates back to 1951 when Bing proved in \cite{Bing} that every chainable continuum can be embedded in the plane. However, his proof does not
offer any insight what such embeddings look like. The first explicit class of embeddings of $X$ was given by Brucks \& Diamond in \cite{BrDi}. Later, Bruin
\cite{Br1} extended this result showing that the embedding of $X$ can be made such that
the shift-homeomorphism extends to a 
Lipschitz map on $\R^2$. Both mentioned results are using symbolic dynamics 
as the main tool in the description of $X$. 

Locally, inverse limit spaces of unimodal maps roughly resemble Cantor sets of arcs. However, this is not true in general. 
In \cite{BBD} Barge, Brucks \& Diamond proved that in the tent family $T_s$
for a dense $G_{\delta}$ set of slopes $s\in [\sqrt{2},2]$, every open set of 
the inverse limit space $\underleftarrow{\lim}([s(1-\frac{s}{2}), \frac{s}{2}],T_s)$
not only contains a homeomorphic copy of the space itself but also 
homeomorphic copies of every  
inverse limit space of a tent map. Thus it would be interesting to see what kind of embeddings in the plane of complicated $X$ are possible.

Philip Boyland posed the following questions on the Continuum Theory and Dynamical Systems Workshop in Vienna in July 2015:
\begin{quote}
 Can a complicated $X$ be embedded in $\R^2$ in multiple ways? For example, do there exist embeddings in $\R^2$ of $X$ that 
 are non-equivalent to the standard embeddings constructed in \cite{BrDi} and 
 \cite{Br1}?
\end{quote}

For a special case of the full tent map
these two questions were already answered 
in the affirmative by Mayer \cite{May}, Mahavier \cite{Mah}, Schwartz \cite{Sch}
and  D\c{e}bski \& Tymchatyn \cite{DT}.

Let $K$ be a continuum.
A \emph{composant} $\mathcal{U}\subset K$ of a
point $x\in\mathcal{U}$ is the union of all proper subcontinua of $K$ containing point $x$.
We call $K$ {\em indecomposable} if it cannot be expressed as the union of two proper subcontinua.
In this case, $K$ has uncountably many composants and every composant is dense in $K$.

There are two fixed points of the map $T$: $0$ and $r$. 
Denote the composant of $(\ldots,0,0)$ by $\mathcal{C}$. 
We can decompose $X=\mathcal{C}\cup X'$, where $X'$ is the \emph{core} of $X$ and $\mathcal{C}$ compactifies on $X$, see e.g.\ \cite{InMa}. 
Let $\mathcal{R}$ be the arc-component of $(\ldots,r,r)$; if
$X'$ is indecomposable then $\mathcal R$ is indeed the composant of
$(\ldots,r,r)$ within $X'$.
The {\em standard embeddings} given in \cite{BrDi} and \cite{Br1} 
make $\mathcal{C}$ and $\mathcal{R}$ respectively accessible. 

\begin{definition}
A point $a\in X\subset\R^2$ is \emph{accessible (i.e., from the complement of $X$)} if there exists an arc $A=[x, y]\subset\R^2$ such that $a=x$ and $A\cap X=\{a\}$. 
We say that a composant $\mathcal{U}\subset X'$ or $\mathcal{U}=\mathcal{C}$ is \emph{accessible}, if $\mathcal{U}$ contains an accessible point.
\end{definition}

In the special case of the full tent map, i.e., with the Knaster continuum
$X$ as the inverse 
limit space, Mayer constructed uncountably many non-equivalent embeddings in \cite{May}.
Later, Mahavier showed in \cite{Mah} that for every composant $\mathcal{U}\subset X$, 
there exists a homeomorphism
$h:X\rightarrow \R^2$ such that each point of $h(\mathcal{U})$ is accessible. Schwartz extended Mahavier's result and
proved that embeddings of $X$ which do not make $\mathcal{C}$ or $\mathcal{R}$ accessible are non-equivalent to 
the standard embeddings.

\begin{definition}
	Denote two planar embeddings of $X$ by  $g_1\colon X\to E_1\subset \R^2$ and $g_2\colon X\to E_2\subset\R^2$. We say that $g_1$ and $g_2$ are \emph{equivalent embeddings} if there exists a homeomorphism $h\colon  E_1\to E_2$ which can be extended to the homeomorphism of the plane.
\end{definition}

In this paper we make use of symbolic dynamics description of $X$ introduced in \cite{BrDi} and \cite{Br1} 
and answer the questions of Boyland in the affirmative. 
We construct embeddings of $X$ by selecting (the itinerary of) the accessible point. For every unimodal map of positive topological entropy,
 we obtain uncountably many embeddings by making an arbitrary point 
from $X$ accessible. 

\begin{theorem}\label{thm:main}
	For every point $a\in X$ there exists an embedding of $X$ in the plane such that $a$ becomes accessible.
\end{theorem}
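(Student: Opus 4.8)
The plan is to encode each point $a\in X$ by its symbolic itinerary and then build a planar embedding in which the chosen itinerary labels the ``outermost'' position at every level of the chain, so that a ray in the complement can reach $a$. Let me think about how this would actually go.

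Since $X$ is the inverse limit of the unimodal map $T$, each point $a=(\ldots,a_{-2},a_{-1},a_0)$ is described by a bi-infinite (or backward-infinite) itinerary in the symbols $\{0,1\}$ (with respect to the turning point of $T$), together with the folding structure recorded by the kneading sequence. The standard embeddings of \cite{BrDi} and \cite{Br1} fix a linear order $\Ls$ on itineraries — essentially the parity-lexicographical (unimodal) order — and lay the points of $X$ out in the plane following that order within each link of a chain $\chain$ of small mesh. The point made accessible is the one whose itinerary is extremal for $\Ls$; this is how $\mathcal C$ and $\mathcal R$ come out accessible in the two known constructions.

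\begin{proof}[Proof plan]
First I would recall the symbolic model: fix a chain $\chain$ of $X$ with small mesh whose links are indexed by admissible finite itineraries, as in \cite{BrDi,Br1}, so that a planar embedding of $X$ amounts to a consistent way of linearly ordering, inside each link, the ``strands'' passing through it. The key observation is that the standard construction only uses that $\Ls$ is a linear order respected by the bonding map; it does not use any special feature of the extremal point beyond its being extremal. So I would replace $\Ls$ by a new order $\Ls^a$ that is tailored to the target point $a$: define $\Ls^a$ so that the (tail of the) itinerary of $a$ becomes the $\Ls^a$-smallest (equivalently, the outermost) admissible itinerary at every level. Concretely, one reads off the itinerary $\ovl{a}$ of $a$ and declares, at each coordinate where a branching choice occurs, that the branch followed by $a$ is the extremal one; this is a ``reshuffling'' of the unimodal order along the orbit of $a$.

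Next I would verify that $\Ls^a$ is still a genuine linear order that is compatible with the chain refinements, i.e.\ that ordering the strands in each link by $\Ls^a$ produces a nested sequence of chains realizing the same inverse limit $X$, now embedded in the plane by the same strip-and-fold recipe as before. This gives an embedding $g_a\colon X\to\R^2$. By construction the strand carrying $a$ lies on the outer boundary of every link that contains it, so the nested intersection of the outer boundaries is a point of the complement's closure accessible by an arc; following these outer boundaries yields an arc $A$ in $\R^2\setminus X$ (except for its endpoint) terminating at $a$, witnessing accessibility in the sense of the Definition. That $a$ is the unique point of $X$ on $A$ follows from the mesh of $\chain$ shrinking to $0$ together with $a$ being $\Ls^a$-extremal at every level.

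The main obstacle, and where the real work lies, is showing that the reshuffled order $\Ls^a$ is \emph{admissible as an embedding order}: the unimodal order $\Ls$ has the property that the fold imposed by $T$ reverses orientation exactly on the branch containing the turning point, and the standard embedding is planar precisely because $\Ls$ is consistent with this folding at every level. When one forces $a$'s itinerary to be extremal one must check that the induced local orderings still glue together without forcing two strands to cross — i.e.\ that planarity (no linking) is preserved. I expect this to require a careful parity/orientation bookkeeping along the backward orbit of $a$, analogous to the consistency checks in \cite{Br1}, and to be the technical heart of the argument; once planarity of $g_a$ is established, accessibility of $a$ is essentially immediate from its extremality. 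Finally, letting $a$ range over a set of points with pairwise distinct tail itineraries and invoking the accessibility results of Mahavier and Schwartz quoted above, one reads off that uncountably many of these embeddings are pairwise non-equivalent.
\end{proof}
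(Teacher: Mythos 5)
Your outline follows the same strategy as the paper --- reorder the backward itineraries so that the tail $L=\ldots l_{-2}l_{-1}.$ of $\bar I(a)$ is extremal, lay the basic arcs out accordingly, and get accessibility of $a$ from extremality --- but it stops exactly where the theorem's actual content begins, so there is a genuine gap. First, you never define the order $\prec_L^a$; you only say that ``the branch followed by $a$ is declared extremal at each branching.'' That naive prescription is not yet a workable order: the paper's definition twists lexicographic comparison by the parity of $\#_1(s_{-(k-1)}\ldots s_{-1})-\#_1(l_{-(k-1)}\ldots l_{-1})$ at the first discrepancy, and this parity correction is precisely what makes the order cooperate with the folds. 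From it the paper derives the betweenness property (Lemma~\ref{lem:third}: if $\overleftarrow{s}\prec_L\overleftarrow{u}\prec_L\overleftarrow{t}$ and $\overleftarrow{s},\overleftarrow{t}$ share their last $n$ symbols, so does $\overleftarrow{u}$), which, combined with Lemma~\ref{lem:second} on projections $\pi_0(A(\overleftarrow{s}))$, yields the two non-crossing cases of Proposition~\ref{prop:cross} (semi-circle versus basic arc, and semi-circle versus semi-circle). Your sentence ``I expect this to require a careful parity/orientation bookkeeping \dots\ and to be the technical heart of the argument'' names this step but does not perform it; since planarity is the whole difficulty, the proposal as written does not prove the theorem.

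There is a second omission your chain formulation hides. The symbolic model is the quotient $\Sigma_{adm}/\!\!\sim$, and the planar picture of basic arcs on Cantor-set levels joined by semi-circles is only a representation $Z$ containing material not in $X$; one must still collapse each family $W_n$ of semi-circles to points without destroying injectivity elsewhere. The paper does this via the maps $f_n$ of Lemma~\ref{lem:fourth}, supported on pairwise disjoint open sets $U_n\supset W_n$ with $\diam(U_n)\to 0$ (whose construction needs the finiteness of $\tau_L,\tau_R$ along convergent sequences), shows $F_n=f_n\circ\cdots\circ f_1$ is uniformly Cauchy, and identifies $F(Z)$ with $Y/\!\!\sim\ \cong X$ by Nadler's theorem on decompositions \cite{Na}; only then does the vertical arc at $(a_0,\psi_L(L))=(a_0,1)$ push forward to an arc meeting $F(Z)$ in the single point $F(a)$. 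In your version, the claim that ordering strands in each link by $\prec_L^a$ ``produces a nested sequence of chains realizing $X$'' is asserted, not checked, and it is equivalent to the two verifications just described. (A minor point besides: your closing appeal to Mahavier and Schwartz concerns non-equivalence of embeddings, i.e.\ the corollary, which the paper instead derives from pseudo-isotopy \cite{BBS} and Mazurkiewicz \cite{Maz}; it plays no role in Theorem~\ref{thm:main} itself.)
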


By $h_{top}(T)$ we denote the topological entropy of $T$. If $h_{top}(T)>0$, then the unimodal inverse limit space $X$ with the bonding map $T$ contains an indecomposable subcontinuum.

\begin{corollary}\label{cor:equiv}
Let $T$ be a unimodal interval map with $h_{top}(T)>0$.
Then there are uncountably many non-equivalent embeddings of $X$ in the plane.
\end{corollary}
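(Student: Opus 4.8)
The plan is to combine Theorem~\ref{thm:main} with the abundance of composants forced by positive entropy, using accessibility as an invariant under planar equivalence. First I would invoke indecomposability: since $\htop(T)>0$, the space $X$ contains an indecomposable subcontinuum, and an indecomposable continuum carries uncountably many pairwise disjoint dense composants. This produces an uncountable family $\{\mathcal{U}_\lambda\}_{\lambda\in\Lambda}$ of distinct composants of the core $X'$ to work with. For each $\lambda$ I would pick a point $a_\lambda\in\mathcal{U}_\lambda$ and apply Theorem~\ref{thm:main} to obtain an embedding $g_\lambda\colon X\to E_\lambda\subset\R^2$ in which $a_\lambda$, and hence the whole composant $\mathcal{U}_\lambda$, is accessible.

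Next I would translate planar equivalence into a statement about self-homeomorphisms of $X$. Suppose $g_\lambda$ and $g_\mu$ are equivalent, witnessed by $h\colon E_\lambda\to E_\mu$ extending to a homeomorphism $H$ of the plane. Since $H$ carries the complement of $E_\lambda$ onto the complement of $E_\mu$ and sends arcs to arcs, it maps accessible points to accessible points; consequently $\phi:=g_\mu^{-1}\circ h\circ g_\lambda$ is a self-homeomorphism of $X$. As the core $X'$ is topologically distinguished, $\phi$ preserves $X'$ and permutes its composants, and it carries every $g_\lambda$-accessible composant to a $g_\mu$-accessible composant. In particular $\phi(\mathcal{U}_\lambda)$ is accessible in the embedding $g_\mu$.

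The counting argument then proceeds as follows. Because $X$ is chainable, hence tree-like, any planar embedding of $X$ fails to separate the plane, so there is a single complementary domain; the points of an indecomposable continuum accessible from one domain then meet at most countably many composants, so the set $\mathcal{A}(g)$ of composants accessible in an embedding $g$ is countable. Assume for contradiction that $\{g_\lambda\}$ falls into only countably many equivalence classes, with representatives $g_{\lambda_1},g_{\lambda_2},\dots$. Every $g_\lambda$ is equivalent to some $g_{\lambda_n}$, so by the previous paragraph there is $\phi_\lambda\in\mathrm{Homeo}(X)$ with $\phi_\lambda(\mathcal{U}_\lambda)\in\mathcal{A}(g_{\lambda_n})$. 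Hence each $\mathcal{U}_\lambda$ lies in the $\mathrm{Homeo}(X)$-orbit of an element of the countable set $S:=\bigcup_n\mathcal{A}(g_{\lambda_n})$, so the uncountably many $\mathcal{U}_\lambda$ are exhausted by countably many orbits of the action of $\mathrm{Homeo}(X)$ on the composants of $X'$. Once these orbits are known to be countable, this is impossible, and the contradiction yields uncountably many equivalence classes.

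The main obstacle is precisely the rigidity input needed at the end: controlling the action of $\mathrm{Homeo}(X)$ on the composants and showing its orbits are countable. The shift $\sigma$ and its powers already move a given composant only within a countable orbit, but one must rule out exotic self-homeomorphisms that could amalgamate uncountably many composants into a single orbit. I would close this gap by appealing to the structure of the homeomorphism group of unimodal inverse limit spaces, namely that self-homeomorphisms respect $X'$ and permute its composants in a way controlled, up to finitely many choices, by powers of $\sigma$, together with the bound that each embedding makes only countably many composants accessible. Establishing these two countability facts is the crux; the cardinality bookkeeping above is then routine.
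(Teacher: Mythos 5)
Your overall architecture matches the paper's: make representatives of uncountably many composants accessible via Theorem~\ref{thm:main}, turn an equivalence of embeddings into a self-homeomorphism of $X$, control such homeomorphisms by powers of the shift (the rigidity input you flag as the crux is exactly the pseudo-isotopy theorem of Barge--Bruin--\v{S}timac \cite{BBS}, which the paper cites rather than proves, so that part is fine), and finish with a countability bound on accessible composants. But two of your steps have genuine gaps. First, your countability bound is asserted for accessible \emph{points}: ``the points of an indecomposable continuum accessible from one domain meet at most countably many composants.'' No such theorem is available to you; the tool that exists, and that the paper uses, is Mazurkiewicz's theorem \cite{Maz}, which bounds only the number of composants containing an accessible \emph{non-degenerate arc}. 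Relatedly, your claim that making $a_\lambda$ accessible makes ``the whole composant $\mathcal{U}_\lambda$ accessible'' does not follow from Theorem~\ref{thm:main}: the construction renders accessible the basic arc through $a_\lambda$ (the top line of the embedding), and for an arbitrary choice of $a_\lambda$ that basic arc may be degenerate, leaving you with a single accessible point to which Mazurkiewicz does not apply. The paper closes precisely this gap by invoking \cite[Proposition 2]{BB} (every subcontinuum of $X'$ contains a dense ray), so that every composant contains a non-degenerate basic arc, which is then placed on top and is accessible as an arc; with that fixed, your cardinality bookkeeping (orbits of composants under $\mathrm{Homeo}(X)$ are countable because every homeomorphism permutes composants of $X'$ as some $\sigma^R$ does) goes through as in the paper.

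Second, your entropy reduction fails in the regime $0<h_{top}(T)\le\log\sqrt2$. There the core $X'$ is decomposable, so it does not have uncountably many composants at all (a decomposable continuum has at most three), and the uncountable family of composants you extract from an indecomposable subcontinuum consists of composants of that \emph{subcontinuum}, not of $X'$ as you state. The paper is careful here: it first proves (Lemma~\ref{lem:indec}) that $X'$ is indecomposable when $\exp(h_{top}(T))>\sqrt2$ and runs the argument on $X'$; in the remaining case it uses the fact that $X'$ contains an indecomposable subcontinuum homeomorphic to the inverse limit of a unimodal map with entropy exceeding $\log\sqrt2$, embeds that subcontinuum in uncountably many non-equivalent ways, and transfers the conclusion to $X$. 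As written, your proposal conflates these two cases, and in the low-entropy case the accessibility and rigidity arguments cannot be applied to $X'$ directly.
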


The short outline of the paper is as follows. Section~\ref{sec:prelim} 
provides the basic set-up as introduced in \cite{BrDi} and \cite{Br1}. 
Next, we construct specific representations of $X$ in the 
plane in Section~\ref{sec:representation}. In Section~\ref{sec:embeddings} we prove that the representations given in Section~\ref{sec:representation} 
are indeed embeddings and prove the main results.
\\[5mm]
{\bf Acknowledgements:} We would like to thank the referee for the valuable comments.

\section{Preliminaries}\label{sec:prelim}
Let $\N:=\{1,2,3,\ldots\}$ and $\N_0:=\{0\}\cup\N$.
Let $T:[0,1] \to [0,1]$ be a unimodal map fixing $0$ and let $c$ denote the {\em critical point} of $T$. We define the \emph{inverse limit space with the bonding map $T$} by
$$
X:=\{(\ldots, x_{-2}, x_{-1}, x_0): T(x_{-(i+1)})=x_{-i}\in [0,1], \ i\in \N_0 \}\subset [0,1]^{-\N}
$$
equipped with the product metric 
$$
d((\ldots, x_{-2}, x_{-1}, x_0), (\ldots, y_{-2}, y_{-1}, y_0)):=\sum_{i\leq 0}2^i|x_i-y_i|.
$$
This makes $X$ a {\em continuum}, \ie a compact and connected metric space. Define the \emph{shift homeomorphism} as 
$\sigma\colon X\to X$, $\sigma((\ldots, x_{-2}, x_{-1}, x_0)):=(\ldots, x_{-2}, x_{-1}, x_0, T(x_0))$ and the \emph{projection map} as
$\pi_{n}\colon X\to [0,1]$, $\pi_n((\ldots, x_{-2}, x_{-1}, x_0)):=x_{-n}$, for every $n\in \N_0$.

In the construction of planar embeddings of spaces $X$ we recall a well-known symbolic description introduced in \cite{BrDi}. 
The space $X$ will be represented by the quotient space 
$\Sigma_{adm}/\!\!\sim$, where $\Sigma_{adm}\subseteq\{0, 1\}^{\Z}$ is equipped with the product topology.
We first need to recall the kneading theory for unimodal maps. 
To every $x\in [0,1]$ we assign its \emph{itinerary}: 
$$
I(x):=\nu_0(x)\nu_1(x)\ldots,$$ where $$\nu_i(x):=\left\{
\begin{array}{ll}
0, &  T^i(x)\in[0, c),\\
*, &  T^i(x)=c,\\
1, &  T^i(x)\in(c, 1] .
\end{array}
\right.
$$
Note that if $\nu_i(x)=*$ for some $i\in \N_0$, then $\nu_{i+1}(x)\nu_{i+2}(x)\ldots=I(T(c))$. The sequence $\nu:=I(T(c))$ is called the \emph{kneading sequence of $T$} 
and is denoted by $\nu=c_1 c_2\ldots$, where $c_{i}:=\nu_{i}(T(c))\in \{0, *, 1\}$ for every $i\in \N$. Observe that if $*$ appears in the kneading sequence, then $c$ is 
periodic under $T$, \ie there exists $n>0$ such that $T^n(c)=c$ and the kneading sequence is of the form $\nu=(c_1\ldots c_{n-1}*)^{\infty}$. 
In this case we adjust the kneading sequence by taking the smallest of $(c_1\ldots c_{n-1} 0)^{\infty}$ and $(c_1\ldots c_{n-1} 1)^{\infty}$ in the parity-lexicographical
ordering defined below. 

By $\#_1(a_1\ldots a_n)$ we denote the number of ones in a finite word $a_1\ldots a_n\in\{0, 1\}^n$; it can be either even or odd. 

Choose $t=t_1 t_2\ldots \in\{0, 1\}^{\N}$ and  $s=s_1 s_2\ldots \in\{0, 1\}^{\N}$ such that $s\neq t$.
Take the smallest $k\in \N$ such that $s_k\neq t_k$. Then the
\emph{parity-lexicographical ordering} is defined as
$$
s\prec t \Leftrightarrow\left\{
\begin{array}{ll}
s_k<t_k \text{ and } \#_1(s_1\ldots s_{k-1}) \text{ is even, or }\\
s_k>t_k \text{ and } \#_1(s_1\ldots s_{k-1}) \text{ is odd. }
\end{array}
\right.
$$
This ordering is also well-defined on $\{0, *, 1\}^{\N}$ once we define $0<*<1$.
\\
Thus if $(c_1\ldots c_{n-1} 0)^{\infty} \prec (c_1\ldots c_{n-1} 1)^{\infty}$ we modify $\nu=(c_1\ldots c_{n-1} 0)^{\infty}$, otherwise 
$\nu=(c_1\ldots c_{n-1} 1)^{\infty}$.
\\[2mm]
{\bf Example.} If $c$ is periodic of period $3$ then the kneading sequence for $T$ is $\nu=(10*)^{\infty}$. Since $101\ldots <100\ldots$ in parity-lexicographical ordering, we modify $\nu=(101)^{\infty}$.
\\[2mm]
In the same way we modify itinerary of an arbitrary point $x\in [0,1]$. If $\nu_i(x)=*$ and $i$ is the smallest positive integer with this property then we replace $\nu_{i+1}(x)\nu_{i+2}(x)\ldots$ with the modified kneading sequence. Thus $*$ can appear only once in the modified itinerary of an arbitrary point $x\in [0,1]$. 

From now onwards we assume that the itineraries of points from $[0,1]$ are modified.

It is a well-known fact (see \cite{MiTh}) that a kneading sequence completely characterizes the dynamics of unimodal map in the sense of the following proposition:

\begin{proposition}\label{admiss}
If a sequence $s_0s_1\ldots\in\{0, *, 1\}^{\N}$ is the itinerary of 
a point $x \in [T^2(c), T(c)]$, then
\begin{equation}\label{eq:adm}
I(T^2(c))\preceq s_ks_{k+1}\ldots\preceq \nu=I(T(c)), \textrm{ for every } k\in \N_0.
\end{equation}
Conversely, assume $s_0s_1\ldots\in\{0,*,1\}^{\N}$  satisfies \eqref{eq:adm}. 
If there exists $j\in \N_0$ such that 
$s_{j+1}s_{j+2}\ldots=\nu$, and $j$ is minimal
with this property, assume additionally that $s_j=*$.
Then $s_0s_1\ldots$ is realized as the itinerary of some $x\in [T^2(c), T(c)]$.
\end{proposition}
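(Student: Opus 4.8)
The statement to prove is Proposition~\ref{admiss}, the characterization of itineraries of points in the core interval $[T^2(c), T(c)]$ by the admissibility condition \eqref{eq:adm}. This is a standard result in kneading theory (Milnor--Thurston), so I would organize the proof around the order-preserving correspondence between points and their itineraries.

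\textbf{Plan of proof.} The backbone is the fact that the (modified) itinerary map $x\mapsto I(x)$ is monotone with respect to the parity-lexicographical ordering: for $x,y\in[0,1]$ one has $x<y \Leftrightarrow I(x)\prec I(y)$, with the usual care at points whose itinerary contains $*$ (i.e.\ preimages of $c$). The parity weighting $\#_1$ exactly accounts for the orientation-reversing behaviour of $T$ on $(c,1]$, so that applying $T$ acts as the shift $\sigma$ on itineraries while respecting this order. I would first establish (or recall from \cite{MiTh,MiTh}) this monotonicity and the commuting relation $I(T(x))=\sigma(I(x))$.

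\textbf{Forward direction.} Suppose $s_0s_1\ldots=I(x)$ for some $x\in[T^2(c),T(c)]$. For each $k$, the shifted sequence $s_ks_{k+1}\ldots$ is the itinerary of $T^k(x)$. Since $T([0,1])=[0,T(c)]$ and, after one more iterate, $T^2([0,1])\subseteq[T^2(c),T(c)]$, every point $T^k(x)$ with $k\ge 1$ lies in $[T^2(c),T(c)]$; the point $x$ itself lies in this interval by hypothesis. The extreme points of this interval are $T^2(c)$ and $T(c)$, whose itineraries are $I(T^2(c))$ and $\nu=I(T(c))$. By monotonicity of $I$, any point in $[T^2(c),T(c)]$ has itinerary between these two in the parity-lexicographical order, giving $I(T^2(c))\preceq s_ks_{k+1}\ldots\preceq\nu$ for every $k\in\N_0$, which is exactly \eqref{eq:adm}.

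\textbf{Converse direction.} Given an admissible $s=s_0s_1\ldots$ satisfying \eqref{eq:adm} (with the stated $*$-condition), I would recover a point $x$ by the standard cylinder-nesting argument. For each $n$, the admissible prefix $s_0\ldots s_n$ determines a nonempty closed interval $J_n\subseteq[T^2(c),T(c)]$ of points whose itinerary begins with that prefix; admissibility \eqref{eq:adm} is precisely what guarantees $J_n\ne\emptyset$, and the $J_n$ are nested and closed, so $\bigcap_n J_n\ne\emptyset$. Any $x$ in this intersection has $I(x)=s$ up to the ambiguity that occurs exactly when some iterate hits $c$; the additional hypothesis that $s_j=*$ whenever $s_{j+1}s_{j+2}\ldots=\nu$ (with $j$ minimal) selects the correct modified itinerary and pins down consistency, so that $I(x)=s$ as claimed. \textbf{The main obstacle} is this boundary bookkeeping: handling the endpoints of the $J_n$ where two cylinders abut, and verifying that the $*$-normalization makes the assignment $x\mapsto I(x)$ a genuine bijection onto admissible sequences rather than merely an order-preserving injection with gaps. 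Getting the parity condition to interact correctly with the monotonicity at these preimages-of-$c$ is where the care is needed; everything else is routine continuity and compactness.
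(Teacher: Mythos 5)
The paper itself does not prove Proposition~\ref{admiss}; it is quoted as a well-known fact with a citation to Milnor--Thurston \cite{MiTh}, so the relevant comparison is with the standard kneading-theory argument, which is indeed the route you outline. Your forward direction is essentially sound but rests on a false containment: $T^2([0,1])\subseteq[T^2(c),T(c)]$ fails whenever $T^2(c)>0$, because $T$ fixes $0$ and hence $0=T^2(0)\in T^2([0,1])$. What you actually need (and what is true) is invariance of the core, $T([T^2(c),T(c)])\subseteq[T^2(c),T(c)]$: since $x$ lies in the core by hypothesis, all iterates $T^k(x)$ stay there, and weak monotonicity of the itinerary map finishes. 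On that point, beware that your stated equivalence $x<y \Leftrightarrow I(x)\prec I(y)$ is false for general unimodal maps --- on a homterval, e.g.\ inside the basin of an attracting cycle, $I$ is constant --- and only the weak form $x\le y \Rightarrow I(x)\preceq I(y)$ holds; fortunately the forward direction uses nothing more.

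The genuine gap is in the converse. You assert that ``admissibility \eqref{eq:adm} is precisely what guarantees $J_n\neq\emptyset$,'' but that assertion \emph{is} the converse: nothing in your outline derives nonemptiness of the cylinders from \eqref{eq:adm}, so the argument is circular at its decisive step. The standard proof requires an induction on word length, showing that the cylinder of each admissible word is a nonempty interval by using that each monotone branch of $T$ maps its branch cylinder onto an interval whose endpoint itineraries are controlled by $\nu$ and $I(T^2(c))$; this is where \eqref{eq:adm} is actually consumed. Likewise, the ``boundary bookkeeping'' you defer is not routine: cylinders are half-open (e.g.\ $\{x:\nu_0(x)=0\}=[0,c)$), so a point of $\bigcap_n\overline{J_n}$ need not have itinerary $s$, and the extra hypothesis on $*$ genuinely \emph{excludes} sequences rather than merely ``selecting the correct modified itinerary.'' Concretely, for the full tent map $\nu=10^\infty$ and $s=110^\infty$ satisfies \eqref{eq:adm} (note $110^\infty\prec 10^\infty$ by the parity rule), yet no point realizes it: one would need $x>c$, $T(x)>c$ and $T^2(x)=0$, forcing $T(x)=1$ and hence $x=c$, a contradiction --- here $j=0$ is minimal with $s_1s_2\ldots=\nu$ but $s_0=1\neq *$. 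Any complete proof must make the $*$-condition do this exclusionary work inside the cylinder induction; as written, your converse is a plan with the central steps named but not carried out.
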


\begin{definition}
We say that a sequence $s_0s_1\ldots\in\{0, *, 1\}^{\N}$ is $\emph{admissible}$ if it is realized as the itinerary of some $x\in[0, 1]$. 
\end{definition}

\begin{remark}
 Note that the Proposition~\ref{admiss} gives conditions on admissible 
itineraries of points $x \in [T^2(c), T(c)]$. For points $y\in [0,T^2(c)) \cup (T(c),1]$ 
 admissible itineraries are exactly $0^{\N}$, $10^{\N}$, $0^{j}s_0s_{1}\ldots$, $10^{j-1}s_0s_{1}\ldots$ where $s_0s_1\ldots \in \{0,*,1\}^{\N}$ is the itinerary of the point $T^{j}(y)$ which satisfies the conditions of Proposition~\ref{admiss} for $j:=\min \{i\in \N: T^{i}(y)\in [T^{2}(c),T(c)]\}$.
\end{remark}

Next we show how to expand the above construction to $X$.  
Take $x=(\ldots, x_{-2}, x_{-1}, x_0)\in X$. Define the itinerary of $x$ as a two-sided infinite sequence 
$$\bar{I}(x):=\ldots\nu_{-2}(x)\nu_{-1}(x).\nu_0(x)\nu_1(x)\ldots\in\{0, *, 1\}^{\Z},$$ where $\nu_0(x)\nu_1(x)\ldots=I(x_0)$ and 
$$\nu_{i}(x)=\left\{
\begin{array}{ll}
0, &  x_i\in[0, c),\\
*, &  x_i=c,\\
1, &  x_i\in(c, 1],
\end{array}
\right.
$$
for all $i<0$. 

We make the same modifications as above. If $*$ appears for the first time at $\nu_k(x)$ for some $k\in\Z$, then $\nu_{k+1}(x)\nu_{k+2}(x)\ldots =\nu$. 
If there is no such minimal $k$, then the kneading sequence is periodic 
with a period $n\in\N$, $\nu=(c_1c_2\ldots c_{n-1}*)^{\infty}$ and the 
itinerary of $x$ is of the form $(c_1\ldots c_{n-1}*)^{\Z}$. 
Replace $(c_1\ldots c_{n-1}*)^{\Z}$ with the modified itinerary $(c_1c_2\ldots c_{n-1}c_n)^{\Z}$, where $\nu=(c_1\ldots$ $c_{n-1}c_n)^{\infty}$. In this way $*$ 
can appear at most once in every itinerary. Now we are ready to identify the inverse limit space with a quotient of 
a space of two-sided sequences consisting of two symbols.
\\
Let $\Sigma:=\{0, 1\}^{\Z}$ be the space of two-sided sequences equipped with the metric 
$$
d((s_i)_{i\in\Z},  (t_i)_{i\in\Z}):= \sum_{i\in\Z}\frac{|s_i-t_i|}{2^{|i|}},
$$
for $(s_i)_{i\in\Z}, (t_i)_{i\in\Z}\in \Sigma$. We define the shift homeomorphism 
$\sigma\colon\Sigma\to\Sigma$ as 
$$
\sigma(\ldots s_{-2}s_{-1}.s_0s_1\ldots):=\ldots s_{-2}.s_{-1}s_0s_1\ldots
$$
By $\Sigma_{adm}\subseteq\Sigma$ we denote all $s\in\Sigma$ such that either
\begin{itemize}
	\item[(a)] $s_ks_{k+1}\ldots$ is admissible for every $k\in\Z$, or
	\item[(b)] there exists $k\in\Z$ such that $s_{k+1}s_{k+2}\ldots=\nu$ and $s_{k-i}\ldots s_{k-1}*s_{k+1}s_{k+2}
	\ldots$ is admissible for every $i\in \N$.
\end{itemize} 
We abuse notation and call the two-sided sequences in $\Sigma_{adm}$ also \emph{admissible}.

Let us define an equivalence relation on the space $\Sigma_{adm}$. For sequences $s=(s_i)_{i\in\Z}$, $t=(t_i)_{i\in\Z} \in \Sigma_{adm}$ we define the relation
$$s\sim t \Leftrightarrow \left\{
\begin{array}{ll}
\textrm{ either } s_i=t_i \textrm{ for every } i\in \Z,\\
\textrm{ or if there exists } k\in\Z \textrm{ such that } s_i=t_i \textrm{ for all } i\neq k \textrm{ but } s_k\neq t_k \\
\textrm{ and } s_{k+1}s_{k+2}\ldots=t_{k+1}t_{k+2}\ldots=\nu.
\end{array}
\right.
$$
It is not difficult to see that this is indeed an equivalence relation on the space $\Sigma_{adm}$. 
Furthermore, every itinerary is identified with at most one different itinerary and the quotient space $\Sigma_{adm}/\!\!\sim$ of $\Sigma_{adm}$ is well defined.
It was also shown that $\Sigma_{adm}/\!\!\sim$ is homeomorphic to $X$. 
So in order to embed $X$ in the plane it is enough to embed $\Sigma_{adm}/\!\!\sim$ in the plane.  
For all observations in this paragraph we refer to the paper \cite{BrDi} of Brucks \& Diamond (Lemmas 2.2-2.4 and Theorem 2.5).
\\[2mm]
An \emph{arc} is a homeomorphic image of an interval $[a,b]\subset \R$.
A key fact for constructing embeddings is that $X$ is the union of 
{\em basic arcs} defined below.
Let $\overleftarrow{s}=\ldots s_{-2}s_{-1}.\in\{0, 1\}^{-\N}$
be an admissible left-infinite sequence (\ie every finite subword is admissible). The subset 
$$
A(\overleftarrow{s}):=\{x\in X : \nu_i(x)=s_i, \forall i<0\} \subset X
$$
is called a {\em basic arc}. Note that
$\pi_0: A(\overleftarrow{s}) \to [0,1]$ is injective. 
In \cite[Lemma 1]{Br1} it was observed that $A(\overleftarrow{s})$ is indeed an arc (possibly degenerate).
For every basic arc we define two quantities as follows:
\begin{eqnarray*}
\tau_L(\overleftarrow{s})
&:=&\sup\{n>1 : s_{-(n-1)}\ldots s_{-1}=c_1c_2\ldots c_{n-1}, \#_1(c_1\ldots c_{n-1})\textrm{ odd}\}, \\
\tau_R(\overleftarrow{s})
&:=&\sup\{n\geq 1 : s_{-(n-1)}\ldots s_{-1}=c_1c_2\ldots c_{n-1}, \#_1(c_1\ldots c_{n-1})\textrm{ even}\}.
\end{eqnarray*}

\begin{remark}
For $n=1$, $c_1c_2\ldots c_{n-1}=\emptyset$ and $\#_1(\emptyset)$ is even. Thus $\tau_R(\overleftarrow{s})=1$ if and only if $s_{-(n-1)}\ldots s_{-1}\neq c_1c_2\ldots c_{n-1}$ for all $n>1$.
\end{remark}

These definitions first appeared in \cite{Br1} in order to study the number of endpoints of inverse limit spaces $X$.
We now adapt two lemmas from \cite{Br1} that we will use later in the paper.

\begin{lemma}\label{lem:first}(\cite{Br1}, Lemma 2)
Let $\overleftarrow{s} \in \{ 0,1\}^{-\N}$ be admissible 
such that $\tau_L(\overleftarrow{s}), \tau_R(\overleftarrow{s})<\infty$.
Then 
$$
\pi_0(A(\overleftarrow{s}))=[T^{\tau_L(\overleftarrow{s})}(c), T^{\tau_R(\overleftarrow{s})}(c)].
$$
If $\overleftarrow{t}\in\{0, 1\}^{-\N}$ is another
admissible left-infinite sequence such that
 $s_i=t_i$ for all $i<0$ except 
for $i=-\tau_R(\overleftarrow{s})=-\tau_R(\overleftarrow{t})$ (or $i=-\tau_L(\overleftarrow{s})=-\tau_L(\overleftarrow{t})$), 
then $A(\overleftarrow{s})$ and $A(\overleftarrow{t})$ have a common boundary point.
\end{lemma}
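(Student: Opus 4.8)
The plan is to prove the two assertions separately, using the fact (already recorded, via \cite{Br1}, Lemma~1) that $A(\overleftarrow{s})$ is an arc on which $\pi_0$ is injective and continuous, so that $\pi_0(A(\overleftarrow{s}))$ is a compact subinterval of $[0,1]$; it then remains only to locate its two endpoints.

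For the equality $\pi_0(A(\overleftarrow{s}))=[T^{\tau_L(\overleftarrow{s})}(c),T^{\tau_R(\overleftarrow{s})}(c)]$ I would first record the elementary kneading fact that, for any admissible word $a_1\ldots a_m$, the set of points of the core $[T^2(c),T(c)]$ whose itinerary begins with $a_1\ldots a_m$ is a subinterval, and that $T^m$ is monotone on it, increasing when $\#_1(a_1\ldots a_m)$ is even and decreasing when it is odd, since each passage through the decreasing branch $(c,1]$ reverses orientation. Write $\tau_R=\tau_R(\overleftarrow{s})$ and $\tau_L=\tau_L(\overleftarrow{s})$. The defining equalities $s_{-(\tau_R-1)}\ldots s_{-1}=c_1\ldots c_{\tau_R-1}$ and $s_{-(\tau_L-1)}\ldots s_{-1}=c_1\ldots c_{\tau_L-1}$ say that for every $x\in A(\overleftarrow{s})$ the coordinate $x_{-(\tau_R-1)}$ (resp.\ $x_{-(\tau_L-1)}$) lies in the cylinder $[c_1\ldots c_{\tau_R-1}]$ (resp.\ $[c_1\ldots c_{\tau_L-1}]$), whose right endpoint is $T(c)$, because $T(c)$ is the global maximum of $T$ and has itinerary exactly $\nu=c_1c_2\ldots$.

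The core estimate is then as follows. Since $x_0=T^{\tau_R-1}(x_{-(\tau_R-1)})$ and $T^{\tau_R-1}$ is increasing on $[c_1\ldots c_{\tau_R-1}]$ (even parity), $x_0$ is maximized at the right endpoint, giving $x_0\le T^{\tau_R-1}(T(c))=T^{\tau_R}(c)$; dually, odd parity makes $T^{\tau_L-1}$ decreasing on $[c_1\ldots c_{\tau_L-1}]$, so $x_0\ge T^{\tau_L}(c)$. To see both bounds are attained I would push $x_{-(\tau_R-1)}\to T(c)$ (resp.\ $x_{-(\tau_L-1)}\to T(c)$) within the arc; the only preimage of $T(c)$ is $c$, so this limit is the boundary point $b$ of the arc whose backward orbit satisfies $b_{-\tau_R}=c$ and $b_{-(\tau_R-1)}=T(c),\ b_{-(\tau_R-2)}=T^2(c),\ldots$, i.e.\ the point whose itinerary is $\ldots s_{-(\tau_R+1)}\,*\,c_1c_2\ldots$ with the $*$ at position $-\tau_R$; this is a genuine point of $X$ in the closure of $A(\overleftarrow{s})$ projecting to $T^{\tau_R}(c)$. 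Maximality of $\tau_R$ and $\tau_L$ guarantees that no longer prefix match tightens these bounds, so the endpoints are exactly $T^{\tau_R}(c)$ and $T^{\tau_L}(c)$. I regard the endpoint bookkeeping at the limit points where the backward orbit reaches $c$, together with checking that $x_{-(\tau_R-1)}$ can indeed approach $T(c)$ while the remaining backward symbols stay admissible (and the degenerate and non-core cases), as the main obstacle.

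For the second assertion, suppose $\overleftarrow{t}$ agrees with $\overleftarrow{s}$ off the single coordinate $-\tau_R$, with $\tau_R(\overleftarrow{t})=\tau_R(\overleftarrow{s})=:\tau_R$. By the first part both arcs project onto intervals with common right endpoint $T^{\tau_R}(c)$, and the corresponding boundary points $b^{s},b^{t}$ are the points described above: their backward orbits satisfy $b^{s}_{-\tau_R}=b^{t}_{-\tau_R}=c$, and their itineraries read $\ldots s_{-(\tau_R+1)}\,*\,c_1c_2\ldots$ and $\ldots t_{-(\tau_R+1)}\,*\,c_1c_2\ldots$. Since $s_i=t_i$ for all $i\neq-\tau_R$, in particular for $i<-\tau_R$, these itineraries coincide, whence $b^{s}=b^{t}$, a point lying in the closure of both arcs; this is the desired common boundary point. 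The case where $\overleftarrow{s}$ and $\overleftarrow{t}$ differ at $-\tau_L$ is identical, using the left endpoint $T^{\tau_L}(c)$ in place of $T^{\tau_R}(c)$.
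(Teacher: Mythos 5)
First, a framing note you could not have known: the paper contains no proof of this lemma at all --- it is imported verbatim from \cite{Br1} (Lemma 2), so your attempt has to be measured against Bruin's original argument. Your first half, the inclusion $\pi_0(A(\overleftarrow{s}))\subseteq[T^{\tau_L(\overleftarrow{s})}(c),T^{\tau_R(\overleftarrow{s})}(c)]$, is essentially sound: cylinders are intervals, $T^{n-1}$ is monotone on the closed cylinder of $c_1\ldots c_{n-1}$ with orientation given by the parity of $\#_1$, and applying this with $n=\tau_R$ and $n=\tau_L$ gives the two bounds; since $A(\overleftarrow{s})$ is an arc on which $\pi_0$ is injective, the image is a compact interval and only the endpoints are at stake. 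One slip in the justification: the right endpoint of such a cylinder need \emph{not} be $T(c)$ when $T(c)<1$ (e.g.\ the cylinder of the word $10$ contains points above $T(c)$ whenever points just below $T^2(c)$ carry symbol $0$). What saves your estimate is instead that every coordinate of a point of $X$ lies in $T([0,1])\subseteq[0,T(c)]$, so $x_{-(\tau_R-1)}=T(x_{-\tau_R})\le T(c)$, and the increasing branch of $T^{\tau_R-1}$ on the cylinder still yields $x_0\le T^{\tau_R}(c)$.

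The genuine gap is the reverse inclusion, which you explicitly set aside as ``the main obstacle'': attainment of the bounds is the entire content of the lemma, and your appeal to ``maximality of $\tau_R$ and $\tau_L$'' points at a non-issue. Longer prefix matches are excluded by the definition of $\tau_R$; the real threat is a \emph{shorter} even-parity match $n<\tau_R$, which by your own monotonicity argument yields the upper bound $T^n(c)$, conceivably strictly below $T^{\tau_R}(c)$. This must be ruled out, and it can be: since $s_{-(\tau_R-1)}\ldots s_{-1}=c_1\ldots c_{\tau_R-1}$ and $s_{-(n-1)}\ldots s_{-1}=c_1\ldots c_{n-1}$, one gets $c_{\tau_R-n+1}\ldots c_{\tau_R-1}=c_1\ldots c_{n-1}$, so $T^{\tau_R-n+1}(c)$ lies in the closed cylinder of $c_1\ldots c_{n-1}$ and is $\le T(c)$, whence applying the increasing map $T^{n-1}$ gives $T^{\tau_R}(c)\le T^n(c)$. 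Likewise, your ``push $x_{-(\tau_R-1)}\to T(c)$ within the arc'' presupposes exactly what needs proof: that $A(\overleftarrow{s})$ contains points with $x_{-\tau_R}$ arbitrarily close to $c$ on the side prescribed by $s_{-\tau_R}$, i.e.\ that the perturbed tails remain admissible; this is where the substantive kneading work of \cite{Br1} happens, and nothing in your text supplies it. Within the present paper the economical repair is to invoke Lemma~\ref{lem:second} (also quoted from \cite{Br1}): it identifies $\sup\pi_0(A(\overleftarrow{s}))$ with the infimum of $T^n(c)$ over even-parity matches, and dually for the infimum; since every match satisfies $n\le\max(\tau_L(\overleftarrow{s}),\tau_R(\overleftarrow{s}))<\infty$, that infimum runs over a finite set and equals $T^{\tau_R}(c)$ by the comparison just made, with attainment from compactness of the arc (in the paper's convention the attaining point has a $*$ at position $-\tau_R$ and belongs to the basic arc via the $\sim$-identification, matching your closure language). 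Granting the first part, your second half is correct: the right endpoint forces $x_{-(\tau_R-1)}=T(c)$, hence $x_{-\tau_R}=c$, and the earlier coordinates form the unique backward orbit of $c$ through the branches prescribed by $s_{-(\tau_R+1)},s_{-(\tau_R+2)},\ldots$, which coincide for $\overleftarrow{s}$ and $\overleftarrow{t}$; so the arcs share that boundary point.
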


\begin{lemma}\label{lem:second}(\cite{Br1}, Lemma 3)
If $\overleftarrow{s} \in \{ 0,1\}^{-\N}$ is admissible, then 
\begin{eqnarray*}
\sup\pi_0(A(\overleftarrow{s}))=\inf\{T^n(c):  s_{-(n-1)}\ldots s_{-1}=c_1\ldots c_{n-1}, n \geq 1, \#_1(c_1\ldots c_{n-1})\textnormal{ even}\},\\
\inf\pi_0(A(\overleftarrow{s}))=\sup\{T^n(c):  s_{-(n-1)}\ldots s_{-1}=c_1\ldots c_{n-1},n \geq 1, \#_1(c_1\ldots c_{n-1})\textnormal{ odd}\}.
\end{eqnarray*}
\end{lemma}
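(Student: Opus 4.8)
The plan is to compute $\sup\pi_0(A(\overleftarrow s))$ in detail; the formula for the infimum follows by the parity-reversed version of the same argument. Two standard facts from kneading theory drive everything: the itinerary map $x\mapsto I(x)$ is monotone for the parity-lexicographical order (see \cite{MiTh}), and prepending a finite word $w$ to two sequences preserves their parity-lexicographical order when $\#_1(w)$ is even and reverses it when $\#_1(w)$ is odd. I also use the identity $c_nc_{n+1}\ldots=I(T^n(c))$, immediate from the fact that $c_i$ records the side of $c$ on which $T^i(c)$ lies. Write $M:=\inf\{T^n(c): s_{-(n-1)}\ldots s_{-1}=c_1\ldots c_{n-1},\ n\ge 1,\ \#_1(c_1\ldots c_{n-1})\text{ even}\}$ and $m:=\sup\{T^n(c): s_{-(n-1)}\ldots s_{-1}=c_1\ldots c_{n-1},\ n\ge 1,\ \#_1(c_1\ldots c_{n-1})\text{ odd}\}$; the goal is $\sup\pi_0(A(\overleftarrow s))=M$ and $\inf\pi_0(A(\overleftarrow s))=m$.

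\emph{Upper bound.} First I would show $x_0\le T^n(c)$ for every $x\in A(\overleftarrow s)$ and every $n$ occurring in the even set. Fix such an $n$. The coordinate $x_{-(n-1)}=\pi_{n-1}(x)$ has forward itinerary $I(x_{-(n-1)})=s_{-(n-1)}\ldots s_{-1}I(x_0)=c_1\ldots c_{n-1}I(x_0)$, while the maximal admissible itinerary is $\nu=c_1\ldots c_{n-1}I(T^n(c))$. By the admissibility bound of Proposition~\ref{admiss}, $I(x_{-(n-1)})\preceq\nu$; the two sequences share the prefix $c_1\ldots c_{n-1}$ of even $\#_1$-parity, so the comparison passes unchanged to the tails and yields $I(x_0)\preceq I(T^n(c))$, hence $x_0\le T^n(c)$ by monotonicity of $I$. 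Taking the infimum over admissible even $n$ gives $\sup\pi_0(A(\overleftarrow s))\le M$; the same computation with odd parity reverses the order and gives $x_0\ge T^n(c)$, i.e.\ $\inf\pi_0(A(\overleftarrow s))\ge m$.

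\emph{Sharpness.} The reverse inequalities are the substantive part. It suffices to realize every $x_0\in(m,M)$: if the two-sided sequence $\overleftarrow s\,.\,I(x_0)$ belongs to $\Sigma_{adm}$, then under the identification $\Sigma_{adm}/\!\!\sim\,\cong X$ it names a point of $A(\overleftarrow s)$ projecting to $x_0$, so that $(m,M)\subseteq\pi_0(A(\overleftarrow s))$ and, together with the upper bound, $\sup\pi_0(A(\overleftarrow s))=M$ and $\inf\pi_0(A(\overleftarrow s))=m$. To check $\overleftarrow s\,.\,I(x_0)\in\Sigma_{adm}$ via condition~(a) one must bound each suffix $s_{-k}\ldots s_{-1}I(x_0)$ (for $k\ge 1$) between $I(T^2(c))$ and $\nu$; suffixes starting inside $I(x_0)$ need no attention, because $I(x_0)$ is already an admissible itinerary.

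\emph{The crux.} The bound on a suffix $s_{-k}\ldots s_{-1}I(x_0)$ splits into two cases. If $s_{-k}\ldots s_{-1}$ already disagrees with $c_1c_2\ldots$ at some position inside itself, the inequality $\preceq\nu$ is forced by admissibility of the finite word $s_{-k}\ldots s_{-1}$, which is built into the hypothesis that $\overleftarrow s$ is admissible, and $x_0$ is irrelevant. The delicate case is $s_{-k}\ldots s_{-1}=c_1\ldots c_k$: here the comparison with $\nu$ descends to $I(x_0)$ versus $c_{k+1}c_{k+2}\ldots=I(T^{k+1}(c))$, and the parity-flip rule shows this is controlled exactly by $x_0\le T^{k+1}(c)$ when $\#_1(c_1\ldots c_k)$ is even and by $x_0\ge T^{k+1}(c)$ when it is odd. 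These are precisely the membership conditions defining $M$ and $m$ with $n=k+1$, so the strict bounds $m<x_0<M$ deliver them. I expect the main obstacle to be exactly this parity bookkeeping in the matching case, together with confirming that the lower admissibility bound $\succeq I(T^2(c))$ is never the binding constraint; the special instance $\tau_L(\overleftarrow s),\tau_R(\overleftarrow s)<\infty$, where $M=T^{\tau_R(\overleftarrow s)}(c)$ and $m=T^{\tau_L(\overleftarrow s)}(c)$ are attained, is already Lemma~\ref{lem:first}, and the argument above is what upgrades it to the case where $M$ or $m$ is only approached along an infinite sequence of matching lengths $n$.
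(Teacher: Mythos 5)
Your upper-bound half is correct and complete in outline: every coordinate of a point of $X$ lies in $T([0,1])=[0,T(c)]$, so $I(x_{-(n-1)})=c_1\ldots c_{n-1}I(x_0)\preceq\nu$ by monotonicity of the itinerary map, and the shared prefix of even parity passes the comparison to the tails, giving $x_0\le T^n(c)$; your dismissal of the non-matching suffixes is also fine, since $s_{-k}\ldots s_{-1}$ extends leftward inside $\overleftarrow{s}$ and hence occurs at a shifted position of an admissible itinerary, forcing the decided comparison with $\nu$ in the right direction independently of $x_0$. Note, however, that the paper itself contains no proof of this lemma: it is imported verbatim from \cite{Br1} (Lemma 3), where the argument runs through the finite cylinders of Lemma~\ref{lem:first} --- one writes $\pi_0(A(\overleftarrow{s}))$ as the nested intersection of the pullback intervals determined by the finite words $s_{-n}\ldots s_{-1}$, tracks their endpoints $\min\{T^m(c)\}$, $\max\{T^m(c)\}$ by induction on $n$, and passes to the limit. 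Because those intervals are projections of genuine sets of points, all admissibility constraints are satisfied automatically there, which is precisely what your symbolic route must verify by hand.

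And that is where your proposal has a genuine gap. Membership of $\overleftarrow{s}\,.\,I(x_0)$ in $\Sigma_{adm}$ requires, by Proposition~\ref{admiss}, the two-sided bound $I(T^2(c))\preceq s_{-k}\ldots s_{-1}I(x_0)\preceq\nu$ on core-type suffixes (and off the core, the exceptional forms $0^j\cdots$, $10^{j-1}\cdots$ of the Remark). Your case analysis only compares suffixes against $\nu$. The lower bound generates its own matching case: suffixes with $s_{-k}\ldots s_{-1}=c_2\ldots c_{k+1}$, where the comparison with $I(T^2(c))=c_2c_3\ldots$ again descends to $I(x_0)$ versus $I(T^{k+2}(c))$. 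When $s_{-(k+1)}=c_1$ this reduces to your first-kind case with $n=k+2$ (the parities match because $c_1=1$ flips them), but when $s_{-(k+1)}=0\neq c_1$ the required inequality on $x_0$ is \emph{not} among the conditions defining $M$ and $m$, and when the second-kind comparison is undecided it recurses to further second-kind conditions; one must prove, by an induction using admissibility of $\overleftarrow{s}$, that these residual constraints are implied by $m<x_0<M$. Your parenthetical that the bound $\succeq I(T^2(c))$ is ``never the binding constraint'' is exactly the missing content, and as a statement about individual suffixes it is false: for $\nu=(101)^\infty$ the word $00$ is excluded from core itineraries precisely by the lower bound, and on basic arcs through $\mathcal{C}$ (tails of zeros) suffixes genuinely drop below $I(T^2(c))$, so there one must invoke the Remark's characterization of $[0,1]$-admissibility rather than the core bounds. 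Two smaller points: the degenerate case $m=M$ needs $A(\overleftarrow{s})\neq\emptyset$ (your realization argument only covers $(m,M)$), and the formulas implicitly use the conventions that $n=1$ always belongs to the even set while the odd set may be empty.
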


{\bf Example.} Take the unimodal map with the kneading sequence $\nu=(101)^{\infty}$. 
Then $\overleftarrow{s}= (011)^{\infty}010.$ and $\overleftarrow{t}=(011)^{\infty}110.$ are admissible, 
$\tau_L(\overleftarrow{s})=\tau_L(\overleftarrow{t})=3$, $\tau_R(\overleftarrow{s})=\tau_R(\overleftarrow{t})=1$ and $s_i=t_i$ for all $i<0$ except for
$i=-3= -\tau_L(\overleftarrow{s})=-\tau_L(\overleftarrow{t})$. By Lemma~\ref{lem:second}, $\pi_0(A(\overleftarrow{s})) = \pi_0(A(\overleftarrow{t})) = 
[T^3(c),T(c)]$, and by Lemma~\ref{lem:first}, $A(\overleftarrow{s})$ and $A(\overleftarrow{t})$ have a common boundary
point which is projected to $T^3(c)$, see Figure~\ref{pic1}.
Note that in this example both $\tau_L$ and $\tau_R$ agree for $\overleftarrow{s}$ and $\overleftarrow{t}$,
which need not be the case in general.

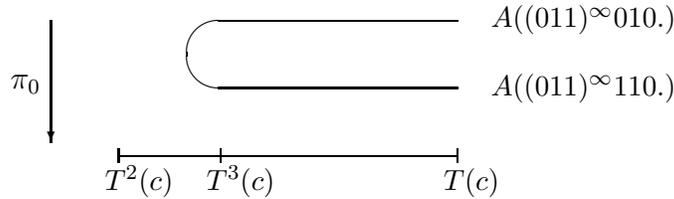
\begin{figure}[!ht]
\unitlength=9mm
\begin{picture}(10,4)(0,-1)

\put(1,2){\vector(0,-1){1.8}}
\put(0.4,1){$\pi_0$}

\put(2,0){\line(1,0){5}}
\put(3.5,1){\line(1,0){3.5}}
\put(3.5,2){\line(1,0){3.5}}
\put(3.5,1.5){\oval(1,1)[l]}

\put(2,-0.1){\line(0,1){0.2}}
\put(1.8,-0.5){\small $T^2(c)$}
\put(3.5,-0.1){\line(0,1){0.2}}
\put(3.3,-0.5){\small $T^3(c)$}
\put(7,-0.1){\line(0,1){0.2}}
\put(6.8,-0.5){\small $T(c)$}

\put(7.5,0.9){\small $A((011)^{\infty}110.)$}
\put(7.5,1.9){\small $A((011)^{\infty}010.)$}
\end{picture}
\caption{Example of two basic arcs having a boundary point in common.}
\label{pic1}
\end{figure}

\section{Representation in the plane}\label{sec:representation}

This section is the first step towards embedding $X$ in the plane so that an arbitrary point $a\in X$ becomes accessible. We denote the symbolic representation of $a$ by $\ldots l_{-2}l_{-1}.l_0l_1\ldots:=\bar{I}(a)$, so $a\in A(\ldots l_{-2}l_{-1}.)$.
We present the following ordering on $\{0, 1\}^{-\N}$ 
depending on some $L=\ldots l_{-2}l_{-1}.$ and we work with this ordering from now onwards. 

\begin{definition}
Let $\overleftarrow{s}, \overleftarrow{t}\in\{0, 1\}^{-\N}$ and let $k\in \N$ be the smallest natural number such that $s_{-k}\neq t_{-k}$. Then 
$$
\overleftarrow{s}\prec_L\overleftarrow{t} \Leftrightarrow 
\begin{cases}
t_{-k}=l_{-k} \text{ and } \#_1(s_{-(k-1)}\ldots s_{-1})-\#_1(l_{-(k-1)}\ldots l_{-1}) \text{ even, or }\\
s_{-k}=l_{-k} \text{ and } \#_1(s_{-(k-1)}\ldots s_{-1})-\#_1(l_{-(k-1)}\ldots l_{-1}) \text{ odd.}
\end{cases}
$$
\end{definition}

Note that such ordering is well-defined and the left infinite tail $L$ is the largest sequence.

\begin{lemma}\label{lem:third}
Assume $\overleftarrow{s}\prec_L\overleftarrow{u}\prec_L\overleftarrow{t}$ and assume that $s_{-n}\ldots s_{-1}=t_{-n}\ldots t_{-1}$. 
Then also $u_{-n}\ldots u_{-1}=s_{-n}\ldots s_{-1}=t_{-n}\ldots t_{-1}$.
\end{lemma}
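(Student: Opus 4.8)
The plan is to prove the statement that any two sequences sharing the suffix $s_{-n}\ldots s_{-1}$ bound an order-convex set for $\prec_L$, by a direct argument by contradiction. First I would suppose the conclusion fails, i.e. $u_{-n}\ldots u_{-1}\neq s_{-n}\ldots s_{-1}$, and let $k$ be the smallest index with $u_{-k}\neq s_{-k}$. Since $\overleftarrow{s}$ and $\overleftarrow{t}$ agree on positions $-n,\ldots,-1$ and $\overleftarrow{u}$ already disagrees with $\overleftarrow{s}$ somewhere in this block, we have $k\le n$, and $\overleftarrow{u}$ agrees with $\overleftarrow{s}$ (hence with $\overleftarrow{t}$) on all positions $-(k-1),\ldots,-1$. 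In particular $t_{-k}=s_{-k}\neq u_{-k}$, so the smallest index at which $\overleftarrow{u}$ differs from $\overleftarrow{s}$ and the smallest index at which $\overleftarrow{u}$ differs from $\overleftarrow{t}$ are both equal to this same $k$.

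The decisive point is that both comparisons $\overleftarrow{s}\prec_L\overleftarrow{u}$ and $\overleftarrow{u}\prec_L\overleftarrow{t}$ are therefore adjudicated at the one index $-k$, using the common prefix $s_{-(k-1)}\ldots s_{-1}=u_{-(k-1)}\ldots u_{-1}=t_{-(k-1)}\ldots t_{-1}$; consequently the parity $P:=\#_1(s_{-(k-1)}\ldots s_{-1})-\#_1(l_{-(k-1)}\ldots l_{-1})$ is the same in all three comparisons. Now I would unwind the definition of $\prec_L$ at index $k$, using that $s_{-k}$ and $u_{-k}$ are the two distinct symbols of $\{0,1\}$ and that $t_{-k}=s_{-k}$. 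When $P$ is even, $\overleftarrow{s}\prec_L\overleftarrow{u}$ forces $u_{-k}=l_{-k}$, while $\overleftarrow{u}\prec_L\overleftarrow{t}$ forces $t_{-k}=s_{-k}=l_{-k}$; when $P$ is odd the two conditions instead force $s_{-k}=l_{-k}$ and $u_{-k}=l_{-k}$ respectively. In either parity case we conclude that both $s_{-k}$ and $u_{-k}$ equal $l_{-k}$, contradicting $s_{-k}\neq u_{-k}$. Hence no such $k$ exists and $u_{-n}\ldots u_{-1}=s_{-n}\ldots s_{-1}=t_{-n}\ldots t_{-1}$.

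I do not expect a genuine obstacle here: once the indices are aligned the statement is purely combinatorial, and the two parity cases collapse symmetrically. The only point requiring care is the bookkeeping in the first paragraph — verifying that the two strict inequalities are decided at the \emph{same} coordinate $-k$ and that the relevant parity $P$ is shared by all three sequences — since the definition of $\prec_L$ reads the sequences from the right and flips orientation according to $\#_1$; a careless treatment could mismatch the decisive indices or the parities and spoil the clean cancellation in the case analysis.
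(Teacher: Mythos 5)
Your proof is correct and takes essentially the same route as the paper's: assume a smallest $k\le n$ with $u_{-k}\neq s_{-k}$, observe that both comparisons $\overleftarrow{s}\prec_L\overleftarrow{u}$ and $\overleftarrow{u}\prec_L\overleftarrow{t}$ are then decided at the same coordinate $-k$ with the same parity, and read off from the definition of $\prec_L$ that two of the digits must equal $l_{-k}$, a contradiction. Your explicit treatment of both parity cases is just the unpacked version of the paper's ``without loss of generality'' step, and your bookkeeping ($k\le n$ rather than the paper's $k<n$) is if anything slightly more careful.
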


\begin{proof}
If $n=1$ the statement follows easily so let us assume that $n\geq 2$.
Assume that there exists $k<n$ such that $u_{-k}\neq s_{-k}$ and take $k$ the smallest natural number with this property. 
Assume without loss of generality that $(-1)^{\#_1(s_{-(k-1)}\ldots s_{-1})}=(-1)^{\#_1(l_{-(k-1)}\ldots l_{-1})}$ (the proof follows similarly when the parities are different).
Since $\overleftarrow{s}\prec_L\overleftarrow{u}$ it follows that $u_{-k}=l_{-k}$. Also,  $\overleftarrow{u}\prec_L\overleftarrow{t}$ gives $t_{-k}=l_{-k}$.
Since $u_{-k}\neq t_{-k}$, we get a contradiction. 
\end{proof}

Let $C \subset [0,1]$ be the middle-third Cantor set, 
$$C:=[0,1]\setminus \bigcup^{\infty}_{m=1} \bigcup^{3^{m-1}-1}_{k=0}(\frac{3k+1}{3^{m}},\frac{3k+2}{3^m}).$$

Points in $C$ are coded by the left-infinite sequences of zeros and ones. 
We embed basic arcs in the plane as horizontal lines along the Cantor set 
and then join corresponding endpoints with semi-circles as in Figure~\ref{pic1}. The ordering has to be defined in a way that semi-circles neither cross horizontal lines nor each other. 

{\bf Example.} For $L=1^{\infty}.$, points in $C$ are coded as in Figure~\ref{pic2} $(a)$. Note that this is the same ordering as in the paper by Bruin \cite{Br1}. 
The ordering obtained by $L=0^{\infty}1.$ is the ordering from the paper by Brucks \& Diamond \cite{BrDi}. In Figure~\ref{pic2} $(b)$ points in Cantor set are coded with  
respect to $L=\ldots 101.$.

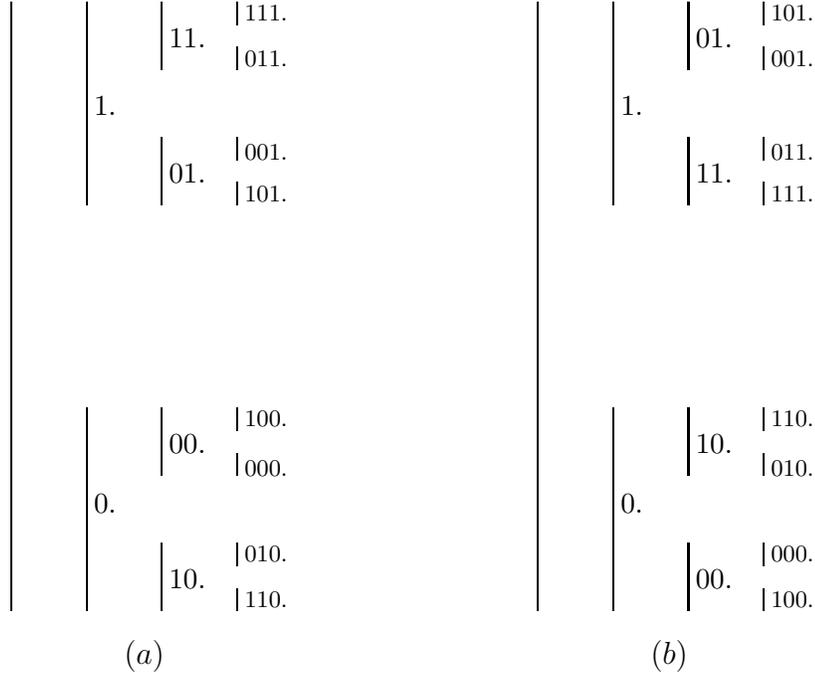
\begin{figure}[!ht]
	\unitlength=1mm
	
	\begin{picture}(85,87)(35,-6)
	\put(20,0){\line(0,1){81}}
	\put(30,0){\line(0,1){27}}
	\put(30,54){\line(0,1){27}}
	\put(40,0){\line(0,1){9}}
	\put(40,18){\line(0,1){9}}
	\put(40,54){\line(0,1){9}}
	\put(40,72){\line(0,1){9}}
	\put(50,0){\line(0,1){3}}
	\put(50,6){\line(0,1){3}}
	\put(50,18){\line(0,1){3}}
	\put(50,24){\line(0,1){3}}
	\put(50,54){\line(0,1){3}}
	\put(50,60){\line(0,1){3}}
	\put(50,72){\line(0,1){3}}
	\put(50,78){\line(0,1){3}}	
	\put(31,13){\small $0.$}
	\put(31,66){\small $1.$}
	\put(41,3){\small $10.$}
	\put(41,21){\small $00.$}
	\put(41,57){\small $01.$}
	\put(41,75){\small $11.$}
	\put(51,0.5){\scriptsize $110.$}
	\put(51,6.5){\scriptsize $010.$}
	\put(51,18){\scriptsize $000.$}
	\put(51,24.5){\scriptsize $100.$}
	\put(51,54.5){\scriptsize $101.$}
	\put(51,60){\scriptsize $001.$}
	\put(51,72.5){\scriptsize $011.$}
	\put(51,78.5){\scriptsize $111.$}
	\put(35, -7){$(a)$}
	
	\put(90,0){\line(0,1){81}}
	\put(100,0){\line(0,1){27}}
	\put(100,54){\line(0,1){27}}
	\put(110,0){\line(0,1){9}}
	\put(110,18){\line(0,1){9}}
	\put(110,54){\line(0,1){9}}
	\put(110,72){\line(0,1){9}}
	\put(120,0){\line(0,1){3}}
	\put(120,6){\line(0,1){3}}
	\put(120,18){\line(0,1){3}}
	\put(120,24){\line(0,1){3}}
	\put(120,54){\line(0,1){3}}
	\put(120,60){\line(0,1){3}}
	\put(120,72){\line(0,1){3}}
	\put(120,78){\line(0,1){3}}
	\put(101,13){\small $0.$}
	\put(101,66){\small $1.$}
	\put(111,3){\small $00.$}
	\put(111,21){\small $10.$}
	\put(111,57){\small $11.$}
	\put(111,75){\small $01.$}
	\put(121,0.5){\scriptsize $100.$}
	\put(121,6.5){\scriptsize $000.$}
	\put(121,18){\scriptsize $010.$}
	\put(121,24.5){\scriptsize $110.$}
	\put(121,54.5){\scriptsize $111.$}
	\put(121,60){\scriptsize $011.$}
	\put(121,72.5){\scriptsize $001.$}
	\put(121,78.5){\scriptsize $101.$}
	\put(105, -7){$(b)$}
	\end{picture}
	\caption{Coding the Cantor set with respect to $(a)$ $\protect L=\ldots 111.$ and $(b)$ $\protect L=\ldots 101.$}
	\label{pic2}
\end{figure}

From now onwards, we assume that $\overleftarrow{s} \in \{0,1\}^{\N}$ is an admissible  left-infinite sequence.
Define $\psi_L: \{ 0 , 1 \}^{-\N}\to C$ as
$$
\psi_L(\overleftarrow{s}) := \sum_{i=1}^\infty (-1)^{\#_1(l_{-i}\ldots l_{-1})-\#_1(s_{-i}\ldots s_{-1})} 3^{-i} + \frac{1}{2} ,
$$
and we let $C_{adm} := \{\psi_L(\overleftarrow{s}) : \overleftarrow{s} \text{ admissible left-infinite sequence}\}$
be the subset of ``admissible vertical coordinates''.
Note that $\psi_L(L) = 1$ is the largest point in $C_{adm}$.

From now onwards let $d_e$ denote the Euclidean distance in $\R^{2}$.

\begin{remark}\label{rem:distance}
Note that if $\overleftarrow{s}, \overleftarrow{t}\in \{ 0,1 \}^{-\N}$ are such that 
$s_{-n}\ldots s_{-1}=t_{-n}\ldots t_{-1}$, then $d_e(\psi_L(\overleftarrow{s}), 
\psi_L(\overleftarrow{t}))\leq 3^{-n}$. If $s_{-n}\neq t_{-n}$, 
then $d_e(\psi_L(\overleftarrow{s}), \psi_L(\overleftarrow{t})) \geq 3^{-n}$.
\end{remark}

Now we represent $X$ as the quotient space of the subset of $I \times C_{adm}$
for $I := [0,1]$. To every point $x=(\ldots, x_{-2}, x_{-1}, x_0)\in X$ we will
assign either a point or two points in $I \times C_{adm}$ by rule 
\eqref{eq:relation} below. 
From now on, write $\dots s_{-3} s_{-2} s_{-1}. := \dots \nu_{-3}(x) \nu_{-2}(x) \nu_{-1}(x)$.
Let $\phi: X \rightarrow I\times C_{adm}$ be defined in the following way:
\begin{equation}\label{eq:relation}
\phi(x):=\begin{cases}
(x_0, \psi_{L}((s_i)_{i<0})), & \text{ if } s_i \neq * \text{ for every } i<0,\\
(x_0, p)\cup (x_0, q), & \text{ if } s_i = * \text{ for some } i<0,
\end{cases}
\end{equation}
where
$$
\begin{cases}
p=\psi_L(\ldots s_{-(i+1)} 0 s_{-(i-1)} \ldots s_{-1}.),\\
q=\psi_L(\ldots s_{-(i+1)} 1 s_{-(i-1)} \ldots s_{-1}.).
\end{cases}
$$
Set $Y:=\phi(X) \subset I\times C_{adm}$. The next step is to identify points in $Y$ in the same way as they are identified in the symbolic representation of $X$. 
For $a,b\in Y$:
$$
a\sim b \text{ if there exists } x\in X \text{ such that } a, b\in\phi(x).
$$
If $a\neq b\sim a$ we write $\tilde{a}:=b$.
If $\tilde{a}=b$ and $x\in X$ is such that $a, b\in\phi(x)$ and $s_{-i}=*$ we say that $a$ and $b$ are \emph{joined at level $i$}.

Note that $\phi\colon X\to Y/\!\!\sim$ is a well-defined map. Equip $Y$ with the Euclidean topology and $Y/\!\!\sim$ with the standard quotient topology. 
 Let $\pi_C:I\times C\to C$ and $\pi_I\colon I\times C\to I$ denote the
\emph{natural projections}.
The next proposition is an analogue of Proposition $4$ from \cite{Br1}. We prove it here for the sake of completeness. 

\begin{proposition}\label{prop:homeo}
The map $\phi\colon X\to Y/\!\!\sim$ is a homeomorphism.
\end{proposition}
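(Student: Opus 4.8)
The plan is to use the standard fact that a continuous bijection from a compact space onto a Hausdorff space is a homeomorphism. Since $X$ is a continuum, and hence compact, it suffices to verify three things: that $\phi$ is a well-defined bijection onto $Y/\!\!\sim$, that $\phi$ is continuous, and that $Y/\!\!\sim$ is Hausdorff. The bulk of the work, and the expected obstacle, will be continuity.

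First I would settle well-definedness and bijectivity, which rest on the injectivity of $\psi_L$. Writing the sign in the definition as $\eps_i:=(-1)^{\#_1(l_{-i}\ldots l_{-1})-\#_1(s_{-i}\ldots s_{-1})}$, the value $\psi_L(\overleftarrow{s})=\frac12+\sum_{i\ge 1}\eps_i 3^{-i}$ is exactly the standard ternary Cantor coding determined by the parities $\#_1(s_{-i}\ldots s_{-1})\bmod 2$, and since $s_{-i}$ is recovered from consecutive parities, $\psi_L$ is injective on $\{0,1\}^{-\N}$. When $s_i=*$ for some $i<0$, condition (b) in the definition of $\Sigma_{adm}$ guarantees that both replacements are admissible, so $p,q\in C_{adm}$, and by construction $(x_0,p)\sim(x_0,q)$; thus $\phi(x)$ is a single $\sim$-class and $\phi$ is well defined into $Y/\!\!\sim$. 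Surjectivity is immediate from $Y=\phi(X)$. For injectivity, suppose $\phi(x)$ and $\phi(y)$ determine the same class. Then $\pi_I$ gives $x_0=y_0$, hence $I(x_0)=I(y_0)$, while by injectivity of $\psi_L$ the vertical coordinates recover the left itineraries, which are either equal or the two sequences differing in exactly one position with $\nu$ to its right (the case $x_{-i}=c$, where $x_{-(i-1)}=T(c)$ forces $\nu$ afterwards). In either case $\bar I(x)$ and $\bar I(y)$ agree up to the symbolic identification of \cite{BrDi}, which gives $x=y$ since $X\cong\Sigma_{adm}/\!\!\sim$.

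The main work is continuity of $\phi$. On a fixed basic arc $A(\overleftarrow{s})$ the left itinerary is constant, so $\phi$ maps it homeomorphically (via $\pi_0$) onto the horizontal segment $\{\psi_L(\overleftarrow{s})\}\times\pi_0(A(\overleftarrow{s}))$, and this is clearly continuous. The delicate point is $x^{(k)}\to x$ when $x$ lies at a turning point, i.e.\ when $x_0$ is an endpoint $T^{\tau_L(\overleftarrow{s})}(c)$ or $T^{\tau_R(\overleftarrow{s})}(c)$ of a basic arc, or when some $x_{-i}=c$ so that $\phi(x)$ splits into two identified points. Given $x^{(k)}\to x$, for every $n$ the coordinates $x^{(k)}_{-n},\ldots,x^{(k)}_0$ converge to $x_{-n},\ldots,x_0$, so away from any coordinate equal to $c$ the itineraries agree on a long central block for large $k$; by Remark~\ref{rem:distance} the vertical coordinates then differ by at most $3^{-n}$, while $\pi_I(\phi(x^{(k)}))=x^{(k)}_0\to x_0$. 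The only way the left itineraries can fail to stabilize is when the $x^{(k)}$ approach along a \emph{different} basic arc meeting $x$ at a common boundary point; here Lemma~\ref{lem:first} and Lemma~\ref{lem:second} show that this boundary point $z$ has $z_{-\tau}=c$ for the relevant index, so $\phi(z)$ is precisely the glued pair $(z_0,p)\sim(z_0,q)$ consisting of the two segment endpoints of those adjacent arcs. Hence even though the heights of the two arcs may be far apart in $C$, the approach from the other arc lands at the endpoint glued to $\phi(z)$, so $\phi(x^{(k)})\to\phi(x)$ in $Y/\!\!\sim$.

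Finally I would verify that $Y/\!\!\sim$ is Hausdorff. Every $\sim$-class has at most two points; the identifications occur only along the pairs of segment endpoints matched by Lemma~\ref{lem:first} (at coordinates hitting $c$), and these form a closed relation on the compact metric space $Y\subset I\times C_{adm}$, so the quotient is metrizable and in particular Hausdorff. Combining the continuous bijection $\phi$ with the compactness of $X$ then yields that $\phi$ is a homeomorphism. I expect the genuinely delicate step to be the continuity argument at turning points, where one must reconcile the geometric gluing of horizontal segments from Lemma~\ref{lem:first} with the combinatorial identification $\sim$, relying on the parity bookkeeping built into $\prec_L$ and $\psi_L$ (together with Lemma~\ref{lem:third}) to guarantee that adjacent arcs are routed so that their glued endpoints actually coincide in the limit.
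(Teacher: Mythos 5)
Your proposal is correct and follows the same overall strategy as the paper: since $X$ is compact, it suffices to show that $\phi$ is a continuous bijection onto the Hausdorff space $Y/\!\!\sim$ (the paper cites Theorem 26.6 of \cite{Mu} for exactly this reduction). Within that skeleton you diverge in two places. For Hausdorffness the paper constructs explicit disjoint saturated neighbourhoods by hand, splitting on whether $|\pi_I(x)-\pi_I(y)|$ vanishes, whereas you invoke the general fact that a closed equivalence relation on a compact metric space yields a metrizable (hence Hausdorff) quotient; this is slicker but leaves you two small debts the paper's hands-on argument does not incur: you must check that $Y$ is actually compact, i.e.\ closed in $I\times C$ (this follows from Lemma~\ref{lem:second} together with the closedness of the set of admissible left-infinite sequences), and that the set of glued pairs is a closed relation (pairs joined at level $n$ have vertical distance comparable to $3^{-n}$ by Remark~\ref{rem:distance}, so a limit of glued pairs is again a glued pair or a diagonal point). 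Neither is hard, but neither is stated in your sketch, so they should be recorded. Secondly, you verify well-definedness and bijectivity explicitly via the injectivity of $\psi_L$ and the recovery of itineraries; the paper never does this, implicitly delegating it to the identification $X\cong\Sigma_{adm}/\!\!\sim$ from \cite{BrDi}, so here you fill in a step the paper leaves to the reader. Your continuity argument is in substance the paper's: stabilization of central itinerary blocks plus Remark~\ref{rem:distance} controls the vertical coordinate, while the identification absorbs approach to a point with some $x_{-i}=c$; the paper instead splits into cases according to whether $\bar I(a)$ contains a $*$, and in its Case II asserts equality of central blocks including the symbol $*$, which taken literally covers only approximants whose corresponding coordinate equals $c$ exactly --- your formulation, which allows approximants to carry symbol $0$ or $1$ at the critical position and shows they land on one of the two endpoints glued into $\phi(x)$, treats this uniformly. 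In short: same route, with a more abstract Hausdorff step (modulo the two closedness details above) and a more complete bijectivity step.
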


\begin{proof}
We first prove that $Y/\!\!\sim$ is a Hausdorff space and because $X$ is compact it is enough to check that $\phi$ is a continuous bijection to obtain a homeomorphism
between $X$ and $Y/\!\!\sim$, see \eg Theorem 26.6. in \cite{Mu}.
	
Take $x\neq y\in Y$ such that $x\neq\tilde{y}$. First assume that $|\pi_{I}(x)-\pi_{I}(y)|=0$.	
	Let $\delta:=\min\{ |\pi_{C}(x)-\pi_{C}(y)|,|\pi_{C}(\tilde{x})-\pi_{C}(y)| \}$. Then take $\{z : | \pi_{C}(x)-\pi_{C}(z)| \text{ or }
	|\pi_{C}(\tilde{x})-\pi_{C}(z)| < \delta/3 \}$  and $\{z : | \pi_{C}(y)-\pi_{C}(z)| \text{ or } |\pi_{C}(\tilde{y})-\pi_{C}(z)|< \delta/3  \}$ 
	for open neighbourhoods of $x$ and $y$ respectively and they are disjoint. Now assume that $\varepsilon:=|\pi_{I}(x)-\pi_{I}(y)|>0$. 
	Then $\{ z : |\pi_{I}(x)-\pi_{I}(z)|<\varepsilon/3 \}$ and $\{ z : |\pi_{I}(y)-\pi_{I}(z)|<\varepsilon/3 \}$ are disjoint open neighbourhoods for $x$ and $y$ respectively,
	so $Y/\!\!\sim$ is indeed a Hausdorff space.
	
Now we prove that $\phi$ is continuous. It is enough to prove that for $a\in X$ and a sequence $(x^n)_{n\in \N}\subset X$  such that $\lim_{n\rightarrow \infty} x^n=a$ it holds that $\lim_{n\rightarrow \infty} \phi(x^n)=\phi(a)$. 
Assume that $\lim_{n\rightarrow \infty} x^n=a$. Thus for every $M\in \N$ there exists $N\in \N$ such that for every $n\geq N$ it follows that 
$\nu_{-M}(x^n)\ldots \nu_{M}(x^n)=\nu_{-M}(a)\ldots \nu_{M}(a)$. We need to show that for every open $\phi(a)\in U\subset Y/\!\!\sim$ there exists $N'\in \N$ such that for 
	every $n\geq N'$ it holds that $\phi(x^n)\in U$. Let us fix an open set $U\ni \phi(a)$. If for $x=(\ldots, x_{-1}, x_0)\in X$ 
	there exists $i\in \N$ such that $\nu_{-i}(x)=*$ then we set $\phi(x)=\phi'(x)\cup \phi''(x)$ where 
	$\phi'(x):=(x_0,\psi_L(\ldots \nu_{-(i+1)}(x)0 \nu_{-(i-1)}(x)\ldots \nu_{-2}(x)\nu_{-1}(x).))$ and  
	$\phi''(x):=(x_0,\psi_{L}(\ldots \nu_{-(i+1)}(x)1 \nu_{-(i-1)}(x)\ldots \nu_{-2}(x)\nu_{-1}(x).))$.
	
\textbf{Case I:} For every $i\in \N$, $\nu_{-i}(a)\neq *$. If there exists $K\in \N$ such that for every $n\geq K$ it follows that $\nu_{-j}(x^n)\neq *$ for every $j\in \N$,
	then there is $N'\geq K$ such that $\phi(x^{n})\in U$ for every $n\geq N'$. \\
        Now assume that there exists an increasing sequence $(n_i)_{i\in \N}\subset \N$ such that $\nu_{-j}(x^{n_i})=*$ for some $j\in\N$. Then there exist open sets
        $U_1^{n_i},U_2^{n_i}\subset Y$ such that $\phi'(x^{n_i})\in U_1^{n_i}$ and $\phi''(x^{n_i})\in U_2^{n_i}$ and $\phi^{-1}(U)=U_1^{n_i}\cup U_2^{n_i}$ 
        for every $i\in \N$. Because $x^n\rightarrow a$ as $n\rightarrow \infty$, by the definition of $\phi$ it follows that $\phi'(x^{n_i})\to\phi(a)$ and $\phi''(x^{n_i})\to\phi(a)$ 
         as $i\rightarrow \infty$. Thus we again conclude that there exists $N'\in \N$ such that for every $n\geq N'$
        it follows that $\phi(x^{n})\in U$.

\textbf{Case II:} Let $K\in \N$ be such that $\nu_{K}(a)=*$ and thus $\phi(a)=\phi'(a)\cup \phi''(a)$. 
	Take $M>K$ so that $\nu_{-M}(a)\ldots \nu_{M}(a)=\nu_{-M}(x^n)\ldots\nu_{-K}(x^n)\ldots\nu_{K}(x^n)\ldots \nu_{M}(x^n)$  for every $n\geq N$, and so $\phi(x^n)=\phi'(x^n)\cup \phi''(x^n)$. 
	Thus there exist open sets $U_1,U_2\subset Y$ such that $\phi'(a)\in U_1$, $\phi''(a)\in U_2$ and $\phi^{-1}(U)=U_1\cup U_2$. It follows that there exists $N'>N$ 
	such that for every $n>N'$ it holds that $\phi'(x^n)\in U_1$ and $\phi''(x^n)\in U_2$ and thus $\phi(x^n)\in U$.
\end{proof}	

Now we are ready to represent $X$ in the plane. This is still not an embedding but it is the first step towards it. Connect identified points in $I\times C_{adm}$ with semi-circles. 
Suppose $a\neq b\in Y$ are joined at level $n$. By Lemma~\ref{lem:first}, points $a$ and $b$ are both endpoints of
basic arcs in $I\times C_{adm}$ and are both 
right or left endpoints. If $\#_1(c_1\ldots c_{n-1})$ is even (odd), $a$ and $b$ are right (left) endpoints and we join 
them with a semi-circle on the right (left), see Figure~\ref{pic1}.

\begin{proposition}\label{prop:cross}
Every semi-circle defined above crosses neither $Y$ nor another semi-circle.
\end{proposition}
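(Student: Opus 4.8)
The plan is to reduce everything to two combinatorial facts about the heights $\psi_L(\overleftarrow{s})$, together with elementary planar geometry of circles. First I would record the shape of a single semicircle: if two arcs are joined at level $n$, their codes $\overleftarrow{s},\overleftarrow{t}$ differ in the single entry at position $-n$ and agree elsewhere, with $s_{-(n-1)}\ldots s_{-1}=c_1\ldots c_{n-1}$; by Lemma~\ref{lem:first} their common endpoint has first coordinate $T^n(c)$, and the semicircle is the half-circle based on the line $x=T^n(c)$ joining the heights $\psi_L(\overleftarrow{s}),\psi_L(\overleftarrow{t})$, bulging right if $\#_1(c_1\ldots c_{n-1})$ is even and left if it is odd. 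I would also note that $\psi_L$ is injective and order-preserving from $(\{0,1\}^{-\N},\prec_L)$ into $(C,<)$, since the sign of the first differing term dominates the tail (Remark~\ref{rem:distance}); in particular a horizontal line $y=h$ meets $Y$ in at most the single basic arc $A(\psi_L^{-1}(h))$.

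For non-crossing with $Y$, fix a right semicircle at level $n$. Any height $h$ strictly between $\psi_L(\overleftarrow{s})$ and $\psi_L(\overleftarrow{t})$ equals $\psi_L(\overleftarrow{u})$ with $\overleftarrow{s}\prec_L\overleftarrow{u}\prec_L\overleftarrow{t}$; since $\overleftarrow{s},\overleftarrow{t}$ agree on positions $-(n-1),\ldots,-1$, Lemma~\ref{lem:third} forces $u_{-(n-1)}\ldots u_{-1}=c_1\ldots c_{n-1}$, and then Lemma~\ref{lem:second} (even parity) gives $\sup\pi_0(A(\overleftarrow{u}))\le T^n(c)$. Thus the whole arc at height $h$ lies in $\{x\le T^n(c)\}$, whereas the semicircle sits at $x>T^n(c)$ for every interior $h$, so the open semicircle misses $Y$. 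Its two endpoints are the glued right endpoints of $A(\overleftarrow{s}),A(\overleftarrow{t})$ (attachment, not crossing), and by Lemma~\ref{lem:first} the partner of any endpoint is unique. The left case is symmetric via the $\inf\pi_0$ formula.

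The heart of the argument is that the height-intervals $[\psi_L(\overleftarrow{s})\wedge\psi_L(\overleftarrow{t}),\psi_L(\overleftarrow{s})\vee\psi_L(\overleftarrow{t})]$ of two semicircles bulging to the \emph{same} side never interleave. Suppose such semicircles, at levels $n$ and $m$, interleaved, say $\overleftarrow{s}\prec_L\overleftarrow{s'}\prec_L\overleftarrow{t}\prec_L\overleftarrow{t'}$, with $\{\overleftarrow{s},\overleftarrow{t}\}$ differing only at $-n$ and $\{\overleftarrow{s'},\overleftarrow{t'}\}$ only at $-m$. Lemma~\ref{lem:third} applied to $\overleftarrow{s}\prec_L\overleftarrow{s'}\prec_L\overleftarrow{t}$ shows $\overleftarrow{s'}$ carries the prefix $c_1\ldots c_{n-1}$, so $\tau_R(\overleftarrow{s'})\ge n$ and hence $m\ge n$; applied to $\overleftarrow{s'}\prec_L\overleftarrow{t}\prec_L\overleftarrow{t'}$ it shows $\overleftarrow{t}$ carries $c_1\ldots c_{m-1}$, so $\tau_R(\overleftarrow{t})\ge m$, while $\tau_R(\overleftarrow{t})=n$. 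Hence $n=m$. It then remains to exclude interleaving at equal level: all level-$n$ right semicircles share the prefix $c_1\ldots c_{n-1}$ and therefore straddle the \emph{same} middle-third gap created at position $-n$, so their intervals overlap; a short computation with the sign exponents in $\psi_L$ shows that passing from the $0$-side to the $1$-side of that gap reverses the order coming from the first position $-k$ ($k>n$) where the two codes differ, which places one interval strictly inside the other. This parity reversal in the equal-level case is the main obstacle and is where the estimate must be made precise.

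Finally I would convert this into non-crossing by elementary geometry. If the two semicircles bulge to opposite sides, the even turning value $T^n(c)$ dominates the odd value $T^m(c)$ (consistency of the formulas in Lemma~\ref{lem:second}), so the right one lies in $\{x\ge T^n(c)\}$ and the left one in $\{x\le T^m(c)\le T^n(c)\}$, and they are disjoint. For same-side pairs, laminarity leaves two cases. If the height-intervals have disjoint interiors the $y$-ranges overlap in at most a point, and a shared endpoint is impossible for distinct semicircles by the uniqueness in Lemma~\ref{lem:first}. If one interval is nested in the other, the inner endpoints lie strictly inside the outer interval, so the argument of the second paragraph gives $T^m(c)\le T^n(c)$; writing the two right semicircles as $x=T^n(c)+\sqrt{(h-P)(Q-h)}$ and $x=T^m(c)+\sqrt{(h-p)(q-h)}$ with $[p,q]\subset[P,Q]$, the inequality $(h-p)(q-h)\le(h-P)(Q-h)$ for $h\in[p,q]$ combined with $T^m(c)\le T^n(c)$ shows the inner arc stays weakly left of the outer one, hence inside it. In every case the semicircles are disjoint, which proves the proposition.
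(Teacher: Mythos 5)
Your second paragraph is exactly the paper's Case I, and your equal-level ``parity reversal'' is in substance the paper's Case II, so the skeleton is right; the explicit nested-case computation with $x=T^n(c)+\sqrt{(h-P)(Q-h)}$ is a nice addition the paper leaves implicit. But the step on which your whole same-side case rests --- the reduction to equal levels --- is invalid. From ``$\overleftarrow{s}'$ carries $c_1\ldots c_{n-1}$, so $\tau_R(\overleftarrow{s}')\ge n$'' you conclude $m\ge n$, and you assert $\tau_R(\overleftarrow{t})=n$. Neither follows: for a pair glued at level $m$ one only knows that \emph{both} codes carry the block $c_1\ldots c_{m-1}$ and differ at $-m$, whence $\tau_R\ge m$ for both; the code whose entry at $-m$ equals $c_m$ can have $\tau_R$ strictly larger than its gluing level. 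So $\tau_R(\overleftarrow{t})\ge\max(n,m)$ is perfectly consistent and no contradiction appears. This failure is not marginal: it happens exactly when the kneading sequence self-overlaps, which is exactly when semicircles of \emph{different} levels sit on the \emph{same} vertical line and interleaving would produce a genuine crossing. For instance, for $\nu=(101)^\infty$ the code $\overleftarrow{t}=\ldots 101.$ is glued at level $1$ but carries $c_1c_2c_3=101$, so $\tau_R(\overleftarrow{t})\ge 4$, and correspondingly $T(c)=T^4(c)$: level-$1$ and level-$4$ semicircles are right semicircles over the same point of $I$. Your argument says nothing about this configuration, so laminarity across distinct levels is unproven. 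Closing it needs a different idea, e.g.: if intervals at levels $n<m$ interleaved, your paragraph-2 argument applied to the endpoint of each semicircle lying strictly inside the other's interval forces $T^n(c)=T^m(c)$, which (the two points being equal, hence having equal itineraries) makes $\nu$ periodic, $\nu=(c_1\ldots c_p)^\infty$ with $p\mid m-n$; then the partner $\overleftarrow{s}$, which is $\overleftarrow{t}$ flipped at position $-n$, contains the word $c_1\ldots c_{p-1}\overline{c_p}$, which exceeds $\nu$ in the parity-lexicographical order no matter what follows, contradicting admissibility. Nothing of this sort is in your proposal. (For comparison: the paper's own proof also treats only semicircles over the same point of $I$; the different-level separation it actually uses is re-derived later, metrically, via the disjoint neighbourhoods $U_n$.)

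A second, smaller error: in the opposite-side case you claim the even turning value dominates the odd one, citing ``consistency'' of Lemma~\ref{lem:second}. That lemma only gives $T^{\tau_L(\overleftarrow{s})}(c)\le T^{\tau_R(\overleftarrow{s})}(c)$ for the two $\tau$'s of a \emph{single} arc; across different arcs the inequality can fail. For a kneading sequence beginning $10010\ldots$ one has $\#_1(100)$ odd, $\#_1(1001)$ even, yet $\sigma^3\nu=101\ldots$ and $\sigma^4\nu=010\ldots$, so $T^4(c)>c>T^5(c)$: a left semicircle based at $T^4(c)$ and a right semicircle based at the smaller $T^5(c)$ have overlapping $x$-ranges. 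The correct fix is again your paragraph-2 mechanism applied to heights rather than bases: an endpoint of the left semicircle (whose arc reaches $x=b_L$) cannot lie at a height strictly inside the interval of a right semicircle with base $b_R<b_L$, and symmetrically, so the height intervals are disjoint and the semicircles cannot meet. With these two repairs your approach goes through, but as written the heart of the argument has a real hole precisely at the degenerate kneading sequences where the proposition is most delicate.
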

\begin{proof}
\textbf{Case I:} Assume that there is a semi-circle oriented to the right which intersects an arc $A$ in $Y$. (See Figure~\ref{case1}.)

\begin{figure}[!ht]
\centering
\begin{tikzpicture}
\draw (1,1)--(9,1);
\draw (1,3)--(9,3);
\draw[domain=270:450] plot ({9+cos(\x)},{2+sin(\x)});
\draw (3,2)--(12,2);
\node at (2.5,2) {\small $A$};
\end{tikzpicture}
\caption{Case I in the proof of Proposition~\ref{prop:cross}.}
\label{case1}
\end{figure}
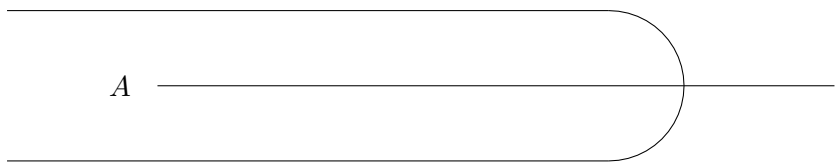

 Translated to symbolics, this means that there exist $n\in\N$ and $\overleftarrow{s}\prec_L\overleftarrow{u}\prec_L\overleftarrow{t}$ such that $s_{-(n-1)}\ldots 
 s_{-1}=t_{-(n-1)}\ldots t_{-1}=c_1\ldots c_{n-1}$, $s_{-n}\neq t_{-n}$ and $\#_1(c_1\ldots c_{n-1})$ is even. By Lemma~\ref{lem:third},  
 $u_{-(n-1)}\ldots u_{-1}=c_1\ldots c_{n-1}$. By Lemma~\ref{lem:second} it follows that $\text{sup} \{\pi_{I}(A)\}\leq T^n(c)$, and thus an intersection between the arc 
 $A$ and a semi-circle cannot occur.\\
\\
\textbf{Case II:} Assume that we have a crossing of two semi-circles which project to the same point in $I$. (See Figure~\ref{case2}.) 

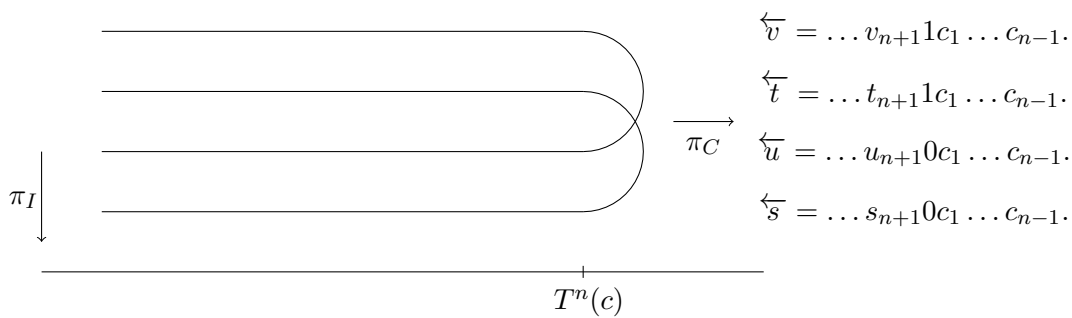
\begin{figure}[!ht]
\centering
\begin{tikzpicture}[scale=0.8]
\draw (1,1)--(9,1);
\draw (1,3)--(9,3);
\draw[domain=270:450] plot ({9+1*cos(\x)},{2+1*sin(\x)});
\draw (1,2)--(9,2);
\draw (1,4)--(9,4);
\draw[domain=270:450] plot ({9+1*cos(\x)},{3+1*sin(\x)});
\draw(0,0)--(12,0);
\draw (9,-0.1)--(9,0.1);
\node at (9.1,-0.5) {\small $T^n(c)$};
\draw[->] (0,2)--(0,0.5);
\node at (-0.3,1.2) {\small $\pi_I$};
\draw[->] (10.5,2.5)--(11.5,2.5);
\node at (11, 2.1) {\small $\pi_C$};
\node at (14.5,4) {\small $\overleftarrow{v}=\ldots v_{n+1} 1c_1\ldots c_{n-1}.$};
\node at (14.5,3) {\small $\overleftarrow{t}=\ldots t_{n+1} 1c_1\ldots c_{n-1}.$};
\node at (14.5,2) {\small $\overleftarrow{u}=\ldots u_{n+1} 0c_1\ldots c_{n-1}.$};
\node at (14.5,1) {\small $\overleftarrow{s}=\ldots s_{n+1} 0c_1\ldots c_{n-1}.$};
\end{tikzpicture}
\caption{Case II in the proof of Proposition~\ref{prop:cross}.}
\label{case2}
\end{figure}

Assume that there exist $n\in\N$ and $\overleftarrow{s}\prec_L\overleftarrow{u}\prec_L\overleftarrow{t}\prec_L\overleftarrow{v}$ such that $s_i=t_i$ for all $i<0$ except for $i=-n$
and $s_{-(n-1)}\ldots s_{-1}=t_{-(n-1)}\ldots t_{-1}=c_1\ldots c_{n-1}$ and $u_i=v_i$ for all $i<0$ except for $i=-n$ and
$u_{-(n-1)}\ldots u_{-1}=v_{-(n-1)}\ldots v_{-1}=c_1\ldots c_{n-1}$. If $s_{-n}=v_{-n}$, then by Lemma~\ref{lem:third} also $t_{-n}=u_{-n}=s_{-n}=v_{-n}$
which contradicts the assumption. It follows that $s_{-n}\neq v_{-n}$, because $\overleftarrow{v},\overleftarrow{u}$ and
$\overleftarrow{t},\overleftarrow{s}$ are respectively connected by a right semi-circle. Assume without loss of generality that $v_{-n}=1$ and $s_{-n}=0$. This gives $t_{-n}=1$ and $u_{-n}=0$. \\
Now take the smallest integer $m>n$ such that $v_{-m}\neq t_{-m}$; this $m$ is also the smallest integer such that $u_{-m}\neq s_{-m}$.
By the previous paragraph, if $(-1)^{\#_1(s_{-(m-1)}\ldots s_{-1})}=(-1)^{\#_1(u_{-(m-1)}\ldots u_{-1})}$ $\neq (-1)^{\#_1(t_{-(m-1)}\ldots t_{-1})}=(-1)^{\#_1(v_{-(m-1)}\ldots v_{-1})}$,
the possibilities for 
$s_{-m},$ $u_{-m}, t_{-m}, v_{-m}$ are (depending on the parities of ones): 
(1) $s_{-m}=0, u_{-m}=1, t_{-m}=1, v_{-m}=0$, or
(2) $s_{-m}=1, u_{-m}=0, t_{-m}=0, v_{-m}=1$.
Both cases lead to a contradiction with $s_{-m}=t_{-m}$ and $u_{-m}=v_{-m}$.
\end{proof}

Thus our ordering gives a representation $Y\cup \{\text{semi-circles}\}$ of $X$ in the plane.
Figure~\ref{representation1} and Figure~\ref{representation2} give
two examples of these planar representations. 

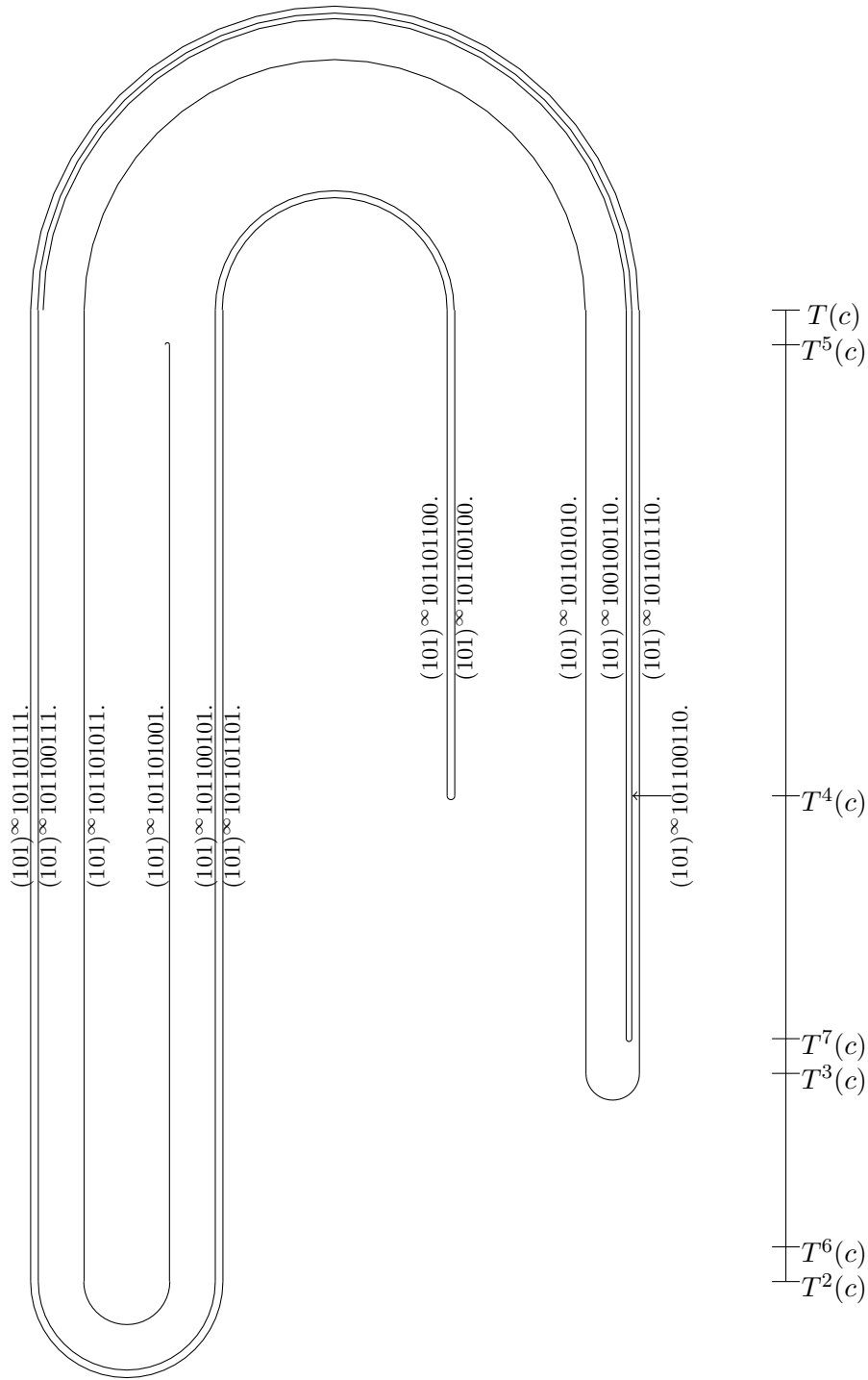
\begin{figure}[!ht]
\centering
\begin{tikzpicture}[scale=0.96,rotate=90]

\draw (0,-2)--(14,-2);
\draw (0,-2.2)--(0,-1.8); \node at (-0.1,-2.7){\small $T^2(c)$};
\draw (0.5,-2.2)--(0.5,-1.8); \node at (0.4,-2.7){\small $T^6(c)$};
\draw (3,-2.2)--(3,-1.8); \node at (2.9,-2.7){\small $T^3(c)$};
\draw (3.5,-2.2)--(3.5,-1.8); \node at (3.4,-2.7){\small $T^7(c)$};
\draw (7,-2.2)--(7,-1.8); \node at (6.9,-2.7){\small $T^4(c)$};
\draw (13.5,-2.2)--(13.5,-1.8); \node at (13.4,-2.7){\small $T^5(c)$};
\draw (14,-2.2)--(14,-1.8); \node at (13.9,-2.7){\small $T(c)$};

\draw (3,0.11)--(14,0.11);
\draw (3.5,0.22)--(14,0.22);
\draw (3.5,0.3)--(14,0.3);
\draw (3,0.88)--(14,0.88);
\draw (7,2.77)--(14,2.77);
\draw (7,2.88)--(14,2.88);
\draw (0,6.11)--(14,6.11);
\draw (0,6.22)--(14,6.22);
\draw (0,6.88)--(13.5,6.88);
\draw (0,8.11)--(14,8.11);
\draw (0,8.77)--(14,8.77);
\draw (0,8.88)--(14,8.88);

\draw[domain=270:450] plot ({14+4.38*cos(\x)}, {4.5+4.38*sin(\x)});
\draw[domain=270:450] plot ({14+4.28*cos(\x)}, {4.5+4.28*sin(\x)});
\draw[domain=270:450] plot ({14+4.2*cos(\x)}, {4.5+4.2*sin(\x)});
\draw[domain=270:450] plot ({14+3.61*cos(\x)}, {4.5+3.61*sin(\x)});
\draw[domain=270:450] plot ({14+1.72*cos(\x)}, {4.5+1.72*sin(\x)});
\draw[domain=270:450] plot ({14+1.62*cos(\x)}, {4.5+1.62*sin(\x)});
\draw[domain=270:450] plot ({13.5+0.03*cos(\x)}, {6.91+0.03*sin(\x)});

\draw[domain=90:270] plot ({0.62*cos(\x)}, {7.495+0.62*sin(\x)});
\draw[domain=90:270] plot ({1.275*cos(\x)}, {7.495+1.275*sin(\x)});
\draw[domain=90:270] plot ({1.385*cos(\x)}, {7.495+1.385*sin(\x)});
\draw[domain=90:270] plot ({7+0.055*cos(\x)}, {2.825+0.055*sin(\x)});
\draw[domain=90:270] plot ({3+0.385*cos(\x)}, {0.495+0.385*sin(\x)});
\draw[domain=90:270] plot ({3.5+0.04*cos(\x)}, {0.26+0.04*sin(\x)});

\node[rotate=90] at (7,9){\scriptsize $(101)^{\infty}101101111.$};
\node[rotate=90] at (7,8.6){\scriptsize $(101)^{\infty} 101100111.$};
\node[rotate=90] at (7,7.9){\scriptsize $(101)^{\infty} 101101011.$};
\node[rotate=90] at (7,7.05){\scriptsize $(101)^{\infty} 101101001.$};
\node[rotate=90] at (7,6.35){\scriptsize $(101)^{\infty} 101100101.$};
\node[rotate=90] at (7,5.95){\scriptsize $(101)^{\infty} 101101101.$};
\node[rotate=90] at (10,3.1){\scriptsize $(101)^{\infty} 101101100.$};
\node[rotate=90] at (10,2.6){\scriptsize $(101)^{\infty} 101100100.$};
\node[rotate=90] at (10,1.1){\scriptsize $(101)^{\infty} 101101010.$};
\node[rotate=90] at (10,-0.1){\scriptsize $(101)^{\infty} 101101110.$};
\node[rotate=90] at (10,0.5){\scriptsize $(101)^{\infty} 100100110.$};
\node[rotate=90] at (7,-0.5){\scriptsize $(101)^{\infty} 101100110.$};
\draw[->] (7,-0.35)--(7,0.21);

\end{tikzpicture}
\caption{The planar representation of an arc in $X$ with the corresponding kneading sequence $\protect\nu=100110010\ldots$. 
The ordering on basic arcs is such that the basic arc coded by $\protect L=1^{\infty}.$ is the largest. }
\label{representation1}
\end{figure}

\begin{figure}[!ht]
\centering
\begin{tikzpicture}[scale=0.96,rotate=90]

\draw (0,-2)--(14,-2);
\draw (0,-2.2)--(0,-1.8); \node at (-0.1,-2.7){\small $T^2(c)$};
\draw (0.5,-2.2)--(0.5,-1.8); \node at (0.4,-2.7){\small $T^6(c)$};
\draw (3,-2.2)--(3,-1.8); \node at (2.9,-2.7){\small $T^3(c)$};
\draw (3.5,-2.2)--(3.5,-1.8); \node at (3.4,-2.7){\small $T^7(c)$};
\draw (7,-2.2)--(7,-1.8); \node at (6.9,-2.7){\small $T^4(c)$};
\draw (13.5,-2.2)--(13.5,-1.8); \node at (13.4,-2.7){\small $T^5(c)$};
\draw (14,-2.2)--(14,-1.8); \node at (13.9,-2.7){\small $T(c)$};

\draw (7,0.11)--(14,0.11);
\draw (7,0.22)--(14,0.22);
\draw (3,2.11)--(14,2.11);
\draw (3.5,2.77)--(14,2.77);
\draw (3.5,2.83)--(14,2.83);
\draw (3,2.88)--(14,2.88);
\draw (0,6.11)--(14,6.11);
\draw (0,6.22)--(14,6.22);
\draw (0,6.88)--(14,6.88);
\draw (0,8.11)--(13.5,8.11);
\draw (0,8.77)--(14,8.77);
\draw (0,8.88)--(14,8.88);

\draw[domain=270:450] plot ({14+1.725*cos(\x)}, {4.5+1.725*sin(\x)});
\draw[domain=270:450] plot ({14+2.39*cos(\x)}, {4.5+2.39*sin(\x)});
\draw[domain=270:450] plot ({14+4.28*cos(\x)}, {4.5+4.28*sin(\x)});
\draw[domain=270:450] plot ({14+4.39*cos(\x)}, {4.5+4.39*sin(\x)});
\draw[domain=270:450] plot ({14+1.62*cos(\x)}, {4.5+1.62*sin(\x)});
\draw[domain=270:450] plot ({14+1.67*cos(\x)}, {4.5+1.67*sin(\x)});
\draw[domain=270:450] plot ({13.5+0.04*cos(\x)}, {8.15+0.04*sin(\x)});

\draw[domain=90:270] plot ({3.5+0.03*cos(\x)}, {2.8+0.03*sin(\x)});
\draw[domain=90:270] plot ({0.62*cos(\x)}, {7.495+0.62*sin(\x)});
\draw[domain=90:270] plot ({1.275*cos(\x)}, {7.495+1.275*sin(\x)});
\draw[domain=90:270] plot ({1.385*cos(\x)}, {7.495+1.385*sin(\x)});
\draw[domain=90:270] plot ({7+0.055*cos(\x)}, {0.165+0.055*sin(\x)});
\draw[domain=90:270] plot ({3+0.385*cos(\x)}, {2.495+0.385*sin(\x)});

\node[rotate=90] at (7,9){\scriptsize $(101)^{\infty}101101101.$};
\node[rotate=90] at (7,8.6){\scriptsize $(101)^{\infty} 101100101.$};
\node[rotate=90] at (7,7.9){\scriptsize $(101)^{\infty} 101101001.$};
\node[rotate=90] at (7,7.05){\scriptsize $(101)^{\infty} 101101011.$};
\node[rotate=90] at (7,6.35){\scriptsize $(101)^{\infty} 101100111.$};
\node[rotate=90] at (7,5.95){\scriptsize $(101)^{\infty} 101101111.$};

\node[rotate=90] at (9,3.1){\scriptsize $(101)^{\infty} 101101110.$};
\draw[->] (7,3.8)--(7,2.83);
\node[rotate=90] at (7,4){\scriptsize $(101)^{\infty} 100100110.$};
\node[rotate=90] at (9,2.6){\scriptsize $(101)^{\infty} 101100110.$};
\node[rotate=90] at (9,1.9){\scriptsize $(101)^{\infty} 101101010.$};
\node[rotate=90] at (10.5,0.4){\scriptsize $(101)^{\infty} 101100100.$};
\node[rotate=90] at (10.5,-0.1){\scriptsize $(101)^{\infty} 101101100.$};

\end{tikzpicture}
\caption{The planar representation of the same arc as in Figure~\ref{representation1} in $X$ with the corresponding kneading sequence $\protect\nu=100110010\ldots$. 
The ordering on basic arcs is such that the basic arc coded by $\protect L=(101)^{\infty}.$ is the largest.}
\label{representation2}
\end{figure}
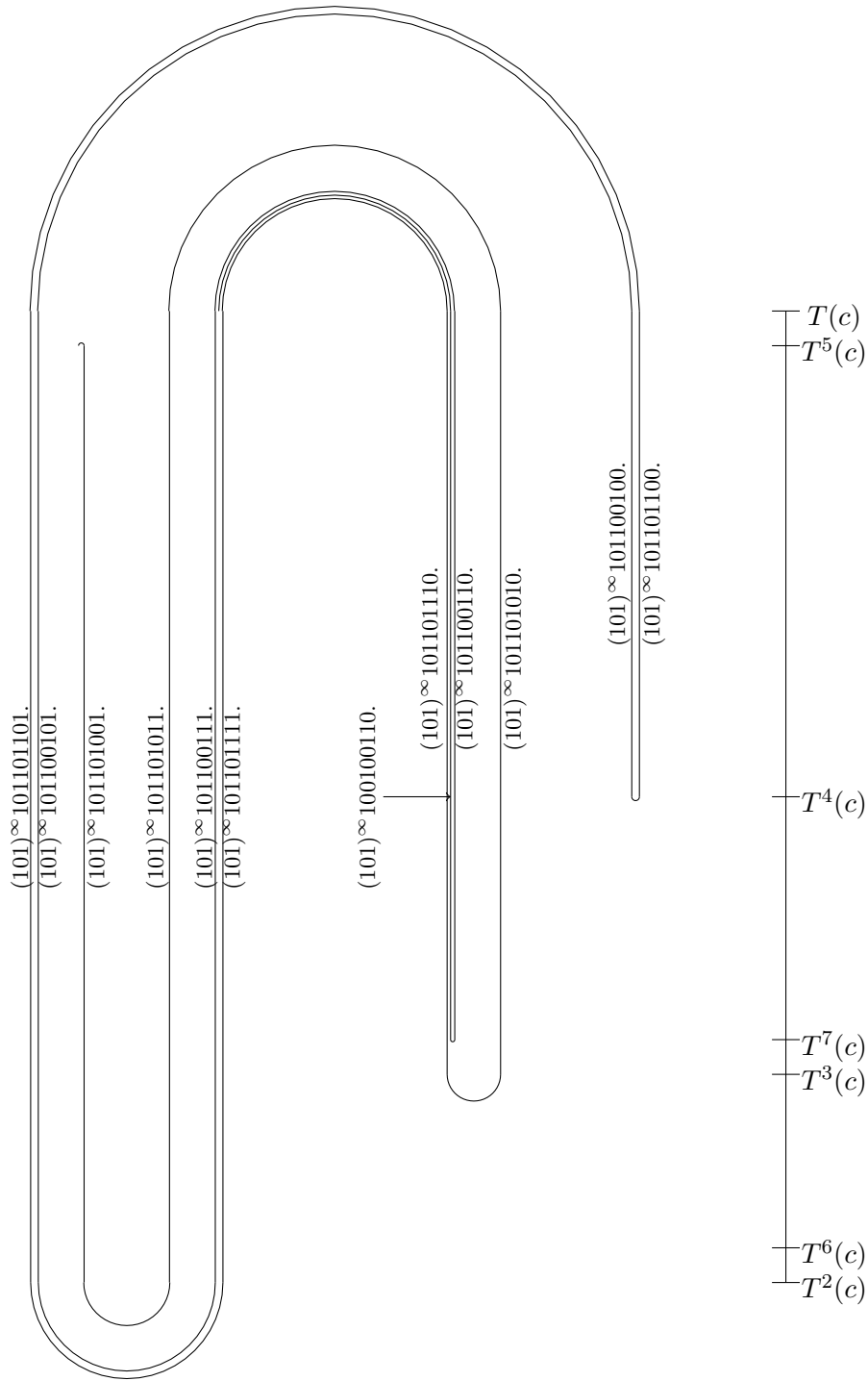

\section{Embeddings}\label{sec:embeddings}
In this section we show that representations of $X$ constructed in the previous section are indeed embeddings.

\begin{lemma}\label{lem:fourth}
Let $U\subset\R^2$ be homeomorphic to the open unit disk, and let $W\subset\R$ be a closed set such that $W \times J \subset U$
for some closed interval $J$.
There exists a continuous function $f:\R^2\to\R^2$ such that
$f(\{w\}\times J)$ is a point for every $w\in W$, 
$f(\{w\}\times J) \neq f(\{w'\}\times J)$ for every $w\neq w'\in W$, 
$f|_{U\setminus W\times J}$ is injective and $f|_{\R^2\setminus U}$ is the identity.
\end{lemma}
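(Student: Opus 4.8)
The plan is to construct $f$ by hand as a \emph{skew product} that preserves the horizontal coordinate and, in each vertical fibre, applies a monotone self-map of $\R$ whose ``strength'' depends only on the horizontal distance to $W$. Writing $J=[\alpha,\beta]$ and $\rho(x):=\mathrm{dist}(x,W)$, I will set $f(x,y):=(x,g_{\theta(x)}(y))$, where $\theta(x):=\max\{1-\rho(x)/\tau,0\}$ for a small constant $\tau>0$ to be fixed, and $\{g_\theta\}_{\theta\in[0,1]}$ is a jointly continuous family of weakly increasing surjections of $\R$ with: $g_0=\mathrm{id}$; for $0<\theta<1$, $g_\theta$ a homeomorphism; $g_1$ collapsing $J$ to the single point $\alpha$; and every $g_\theta$ equal to the identity outside a slightly enlarged window $[\alpha,\beta']$ with $\beta'>\beta$. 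Concretely one can take
\begin{equation*}
g_\theta(y):=\begin{cases} y, & y\le\alpha,\\ \alpha+(1-\theta)(y-\alpha), & \alpha\le y\le\beta,\\ \text{linear onto }[\alpha+(1-\theta)(\beta-\alpha),\beta'], & \beta\le y\le\beta',\\ y, & y\ge\beta'.\end{cases}
\end{equation*}

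Then I would verify the four required properties, which fall out almost formally from the skew-product form. Since $f$ preserves the first coordinate, points in different fibres have different images; within a fibre over $x\notin W$ one has $\theta(x)<1$, so $g_{\theta(x)}$ is a strict homeomorphism and $f$ is injective there; within a fibre over $x\in W$ one has $\theta(x)=1$, and $g_1$ is injective on $\R\setminus J$ while sending $J$ to $\alpha$. Hence $f$ is injective on all of $\R^2\setminus(W\times J)$ (in particular on $U\setminus W\times J$), it maps each $\{w\}\times J$ to the single point $(w,\alpha)$, and distinct $w$ give the distinct points $(w,\alpha)$. Joint continuity of $(\theta,y)\mapsto g_\theta(y)$ together with continuity of $\rho$ yields continuity of $f$, including across the collapse level $\theta=1$.

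The only genuinely global condition is $f|_{\R^2\setminus U}=\mathrm{id}$, and this is where the work lies. By construction $f$ differs from the identity only on $R:=\{x:\rho(x)<\tau\}\times[\alpha,\beta']$, so it suffices to arrange $R\subset U$. When $W$ is compact, $W\times J$ is a compact subset of the open set $U$ and therefore has a uniform Euclidean $\eps$-neighbourhood inside $U$; a short computation shows $\{x:\rho(x)<\eps/2\}\times[\alpha,\beta+\eps/2]$ lies in that neighbourhood, so choosing $\tau:=\eps/2$ and $\beta':=\beta+\eps/2$ forces $R\subset U$ and completes the argument.

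The main obstacle is exactly this localization for an \emph{arbitrary} open disk $U$: when $W$ is unbounded, $U$ may ``pinch'' along $W\times J$ and admit no uniform product neighbourhood, so $\tau$ and $\beta'$ cannot be taken constant. I would resolve this by replacing them with position-dependent windows, choosing continuous functions $\tau(x)>0$ and $\beta'(x)>\beta$ with $\{x':|x'-x|<\tau(x)\}\times[\alpha,\beta'(x)]\subset U$ near $W$ (possible since each compact fibre $\{x\}\times J$ has positive distance to $\R^2\setminus U$) and letting the collapse strength taper to $0$ as one approaches $\partial U$; the injectivity and continuity checks are unchanged, as the skew-product structure persists. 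I note in passing that the hypothesis that $U$ is a disk is not actually used in this construction, only that $U$ is open and contains $W\times J$; a softer alternative would be to invoke Moore's decomposition theorem, but the explicit map is preferable precisely because it makes the identity-off-$U$ and injectivity conditions transparent.
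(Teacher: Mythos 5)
Your proposal is correct and is essentially the paper's own proof: the paper likewise defines a fibre-preserving skew product $\hat f(x,y)=(x,g(d_e(x,W),y))$ with a piecewise-linear family $g(a,\cdot)$ that is the identity for $a=1$, collapses $J$ to a point for $a=0$, fixes the top and bottom of the window, and is then extended by the identity off $U$. The only difference is bookkeeping: the paper normalizes ``without loss of generality'' to $U=(-1,2)\times(-1,1)$, $J=[-1/2,1/2]$, $\min W=0$, $\max W=1$ (implicitly using that $W$ is bounded, as it is in the paper's application to the compact sets $W_n$), whereas you keep general position, localize the support via $\tau$ and $\beta'$, and explicitly flag --- with a workable fix via position-dependent windows --- the unbounded-$W$ case that the paper's normalization silently excludes.
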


\begin{proof}
Without loss of generality we can take $U:=(-1,2)\times (-1,1)$, $J:=[-1/2, 1/2]$ and $\min(W)=0$, $\max(W)=1$, see Figure~\ref{fig:glue}.

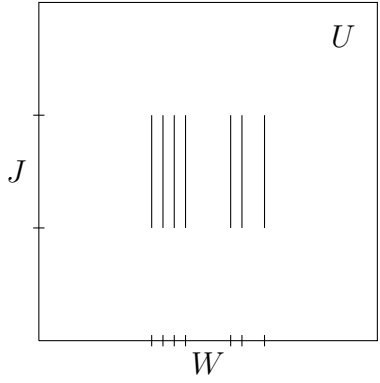
\begin{figure}[!ht]
	\centering
	\begin{tikzpicture}[scale=1.5]
	\draw (0,0)--(0,3)--(3,3)--(3,0)--(0,0);
	\draw (-0.05,1)--(0.05,1);
	\draw (-0.05,2)--(0.05,2);
	\node at (-0.2,1.5) {$J$};
	\node at (1.5,-0.2) {$W$};
	\draw (1,-0.05)--(1,0.05);
	\draw (2,-0.05)--(2,0.05);
	\draw (1.1,-0.05)--(1.1,0.05);
	\draw (1.2,-0.05)--(1.2,0.05);
	\draw (1.3,-0.05)--(1.3,0.05);
	\draw (1.7,-0.05)--(1.7,0.05);
	\draw (1.8,-0.05)--(1.8,0.05);
	\node at (2.7,2.7) {$U$};
	\draw (1,1)--(1,2);
	\draw (1.1,1)--(1.1,2);
	\draw (1.2,1)--(1.2,2);
	\draw (1.3,1)--(1.3,2);
	\draw (1.7,1)--(1.7,2);
	\draw (1.8,1)--(1.8,2);
	\draw (2,1)--(2,2);
	\end{tikzpicture}
	\caption{Set-up in Lemma~\ref{lem:fourth}.}
	\label{fig:glue}
\end{figure}

For every $a\in[0,2]$ we define a continuous function $g(a,\cdot) : [-1,1]\to [-1,1]$ as
$$
g(a,x):=\left\{
\begin{array}{rl}
(2-a)x+1-a, & x\in[-1, -1/2]\\
ax, & x\in[-1/2, 1/2]\\
(2-a)x+a-1, & x\in[1/2, 1].
\end{array}
\right.
$$
Note that $g(a, \cdot )$ is injective for every $a\in[0, 2]$, $g(0, x)=0$ for all $x\in [-1/2, 1/2]$, and $g(1, x)=x$ for all $x\in[-1,1]$.

Define $\hat f:[-1, 2]\times[-1,1]\to[-1, 2]\times[-1,1]$ as
$$\hat f(x,y):=(x, g (d_e(x, W), y) ),$$
where $d_e(x, W)=\inf_{w \in W}\{d_e(x, w)\}$. Note that $x\mapsto d_e(x, W)$ is continuous, so $\hat f$ is continuous. 
Also, $\hat f(w,y)=(w, g(0,y))=(w, 0)$ for $(w,y)\in W\times J$ 
and $\hat f$ is injective otherwise. Also note that $\hat f$ is the identity on the boundary of $[-1, 2]\times [-1, 1]$, so $\hat f$ can be extended continuously to the map $f:\R^2\to\R^2$ such that
$f|_{\R^2\setminus U}$ is the identity.
\end{proof}

Define $W_n\subset\R^2$ to be the set consisting of all semi-circles that join pairs of points at level $n$.
Note that there exists a set $W\subset\R$ such that $W_n$ is homeomorphic to $W\times J$. 
Observe that $W$ is closed. Indeed, if for a sequence $(\overleftarrow{s}^{k})_{k\in\N}\subset \{ 0,1\}^{\N}$
there exists $m\in\N$ such that $\tau_{R}(\overleftarrow{s}^{k})
=m$ for every $k\in\N$ and $\lim_{k\rightarrow \infty}\overleftarrow{s}^{k}=\overleftarrow{s}$, then 
$\tau_{R}(\overleftarrow{s})=m$. The analogous argument holds 
for $\tau_L$.

\begin{lemma}
There exist open sets $U_n\subset\R^2$ such that $W_n\subset U_n$ and for every $n\neq m\in \N$, $U_n\cap U_m=\emptyset$ and $\diam(U_n)\to 0$ as $n\to \infty$.
\end{lemma}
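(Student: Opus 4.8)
The plan is to realize each $U_n$ as a metric neighbourhood of $W_n$ of a carefully chosen shrinking radius, so that the whole statement reduces to two facts: that $\diam(W_n)\to 0$, and that the sets $W_n$ do not accumulate onto one another.

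First I would verify $\diam(W_n)\to 0$. Every semi-circle in $W_n$ joins two points $a,b\in Y$ coming from some $x\in X$ with $x_{-n}=c$; then $x_{-n+1}=T(c),x_{-n+2}=T^2(c),\dots$, so the coding of both endpoints satisfies $s_{-(n-1)}\dots s_{-1}=c_1\dots c_{n-1}$. Since these $n-1$ symbols are the same for \emph{every} semi-circle of level $n$, Remark~\ref{rem:distance} forces all endpoints of level-$n$ semi-circles to have $\pi_C$-coordinate inside a single interval of length at most $3^{-(n-1)}$, while Lemma~\ref{lem:first} places every such base point at $\pi_I$-coordinate $T^{n}(c)$, the semi-circle bulging out by at most half of its vertical span. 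Hence $W_n$ lies in a box of diameter $O(3^{-n})$ and $\diam(W_n)\to 0$.

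The heart of the matter is to rule out accumulation. Set $A:=\bigcap_{N}\overline{\bigcup_{m\ge N}W_m}$, the set of points approached by semi-circles of arbitrarily large level; $A$ is closed. I claim $A\cap W_n=\emptyset$ for every $n$. A point of $W_n$ that is not an endpoint lies on the open semi-circle, hence at some positive distance $\eta$ from $Y$; since every $W_m$ lies within $\diam(W_m)$ of $Y$ (its endpoints are in $Y$), such a point is at distance $\ge \eta-\diam(W_m)$ from $W_m$, which stays bounded away from $0$ as $m\to\infty$, so it is not in $A$. The endpoints of $W_n$ lie in $Y$ and, by Lemma~\ref{lem:first}, have $\tau_L$ or $\tau_R$ equal to $n$, in particular finite. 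On the other hand, a point of $A\cap Y$ is a limit of endpoints of level-$m_k$ semi-circles with $m_k\to\infty$, i.e.\ of codings whose $\tau_R$ (or $\tau_L$) equals $m_k$; arguing exactly as in the proof that $W$ is closed, but now with $m_k\to\infty$, the limiting coding matches the kneading prefix $c_1c_2\dots$ arbitrarily far and therefore has $\tau_R=\infty$ (resp.\ $\tau_L=\infty$). Such a point is not joined at any finite level, so it is not an endpoint of any $W_n$. This establishes $A\cap W_n=\emptyset$.

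Finally I would assemble the neighbourhoods. Each $W_n$ is compact (it is $W\times J$ with $W$ closed and $J$ a closed interval) and disjoint from the closed set $A$, so $2\rho_n:=d_e(W_n,A)>0$. A compactness argument shows that only finitely many $W_m$ meet the $\rho_n$-neighbourhood of $W_n$: otherwise, from points $z_k\in W_{m_k}$ with $m_k\to\infty$ we could extract a limit lying in $A$ within $\rho_n$ of $W_n$, contradicting $d_e(W_n,A)=2\rho_n$. Together with the pairwise disjointness of the compact sets $W_n$ — which is exactly Proposition~\ref{prop:cross} combined with the fact that semi-circles of different levels have distinct endpoints (their $\tau$ values differ) — this yields $\delta_n:=\inf_{m\neq n}d_e(W_n,W_m)>0$ for each $n$. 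Setting $r_n:=\min\{\delta_n/3,\,2^{-n}\}$ and $U_n:=\{z\in\R^2:d_e(z,W_n)<r_n\}$, the $U_n$ are open, contain $W_n$, satisfy $U_n\cap U_m=\emptyset$ since $d_e(W_n,W_m)\ge\max\{\delta_n,\delta_m\}>r_n+r_m$, and obey $\diam(U_n)\le\diam(W_n)+2r_n\to 0$. The main obstacle is the middle step — proving $A\cap W_n=\emptyset$ — and inside it the claim that a limit of level-$m_k$ joins with $m_k\to\infty$ has infinite $\tau$; once this separation from the accumulation set is in hand, everything else is routine.
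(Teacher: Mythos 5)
Your argument is essentially sound and its crucial step coincides with the paper's, but you assemble the $U_n$ genuinely differently. The paper never introduces your separation distances $\delta_n$: it encloses $W_n$ in an explicit closed semi-disc $V'_n$ centred at $(T^n(c),M_n)$ of radius $\frac{1}{2}\diam(A_n)\le\frac{1}{2}\cdot 3^{-n}$, where $A_n$ is the smallest interval containing the admissible heights with $\tau_R$ or $\tau_L$ equal to $n$, fattens it to an open $V_n$, and then simply sets $U_n:=V_n\setminus\overline{\bigcup_{i>n}V_i}$, so disjointness holds by construction and no positive distance between distinct $W_n$ and $W_m$ is ever needed; the only thing left to check is $W_n\cap\overline{\bigcup_{i>n}V_i}=\emptyset$. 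Your route instead requires the $W_n$ to be pairwise disjoint compacta with $\inf_{m\ne n}d_e(W_n,W_m)>0$; be aware that Proposition~\ref{prop:cross} only excludes crossings, so your supplementary claim about distinct endpoints really carries weight, and your parenthetical ``their $\tau$ values differ'' only disposes of right--right and left--left coincidences --- for a right endpoint of level $n$ meeting a left endpoint of level $m$ the clean justification is rather the fact quoted from \cite{BrDi} that every itinerary is identified with at most one other. Where the two proofs agree is at the heart: the paper also rules out accumulation of high levels on $W_n$ by asserting that a limit of codings with $\tau_R(\overleftarrow{s}^{i_k})=i_k\to\infty$ would have $\tau_R=\infty$; your phrase ``the limiting coding matches the kneading prefix arbitrarily far'' is exactly this assertion, stated at exactly the paper's level of detail (strictly speaking, a coordinatewise limit of such codings has suffixes that are limits of the subwords $c_{i_k-j}\ldots c_{i_k-1}$ of $\nu$, not automatically of the prefixes $c_1\ldots c_j$, so both you and the paper lean on an unspoken argument here --- but that is a feature of the original proof, not a deviation from it). On balance, your metric-neighbourhood construction, including the estimate separating interior semi-circle points from $Y$ (which tacitly uses that $Y$ is closed), is a transparent but slightly heavier repackaging, while the paper's subtraction trick buys pairwise disjointness for free.
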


\begin{proof}
We define the set $G_n:=\{\psi_L(\overleftarrow{s}) : \overleftarrow{s}\in\{0, 1\}^{-\N} \text{ admissible},\  \tau_R(\overleftarrow{s})=n \text{ or }\\ \tau_L(\overleftarrow{s})=n\}$ for
every $n\in \N$ and let $A_n$ be the smallest interval in $[0,1]$ 
containing $G_n$. 
Note that $A_n$ is closed and $\diam(A_n)\leq 3^{-n}$.

Let $M_n$ denote the midpoint of $A_n$.
If $\#_1(c_1\ldots c_n)$ is odd, let
$$
V'_n=\left\{(x,y)\in\R^2 : (x-T^n(c))^2+(y-M_n)^2\leq \left(\frac{\diam(A_n)}{2}\right)^2,\ x\leq T^n(c)\right\}
$$
be the closed left semi-disc centered around $(T^n(c), M_n)$. 
Similarly, if $\#_1(c_1\ldots c_n)$ is even, let
$$
V'_n=\left\{(x,y)\in\R^2 : (x-T^n(c))^2+(y-M_n)^2\leq \left(\frac{\diam(A_n)}{2}\right)^2,\ x\geq T^n(c)\right\}.
$$
be the closed right semi-disc centered around $(T^n(c), M_n)$. 
Note that $W_n\subset V'_n$, $\diam(V'_n)\leq 3^{-n}$
and that $d_e(A_n, \psi_L(\overleftarrow{t}))>3^{-n}$ for all $\psi_L(\overleftarrow{t})\notin A_n$. Let $V_n$ be the
$\frac{\diam(A_n)}{2\cdot 3}$-neighbourhood
of $V'_n$, that is, 
$$
V_n=\left\{x\in\R^2 : \textrm{ there exists } y \in V'_n\textrm{ such that }d_e(x, y)<\frac{\diam(A_n)}{2\cdot 3}\right\},
$$
see Figure~\ref{semi-circ}.
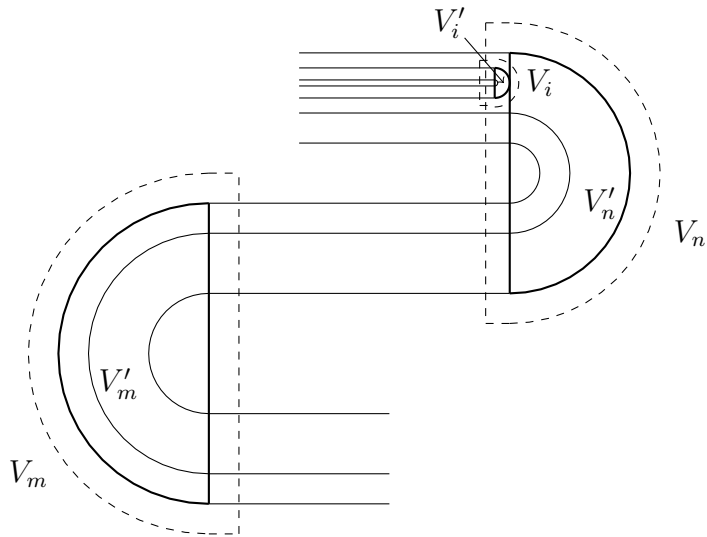
\begin{figure}[!ht]
\centering
\begin{tikzpicture}[scale=0.4]

\draw (10,1)--(16,1);
\draw (10,2)--(16,2);
\draw (10,4)--(16,4);
\draw (10,8)--(20,8);
\draw (10,10)--(20,10);
\draw (10,11)--(20,11);
\draw (13,13)--(20,13);
\draw (13,14)--(20,14);
\draw (13,16)--(20,16);

\draw[domain=270:450] plot ({20+1*cos(\x)}, {12+1*sin(\x)});
\draw[domain=270:450] plot ({20+2*cos(\x)}, {12+2*sin(\x)});
\draw[thick,domain=270:450] plot ({20+4*cos(\x)}, {12+4*sin(\x)});
\draw[thick](20,16)--(20,8);
\draw[dashed,domain=270:450] plot({20+5*cos(\x)}, {12+5*sin(\x)});
\draw[dashed] (19.2,17)--(20,17);
\draw[dashed] (19.2,7)--(20,7);
\draw[dashed](19.2,17)--(19.2,7);
\draw[domain=90:270] plot ({10+2*cos(\x)}, {6+2*sin(\x)});
\draw[domain=90:270] plot ({10+4*cos(\x)}, {6+4*sin(\x)});
\draw[thick,domain=90:270] plot ({10+5*cos(\x)}, {6+5*sin(\x)});
\draw[thick](10,11)--(10,1);

\draw(13,14.5)--(19.5,14.5);
\draw (13,14.9)--(19.5,14.9);
\draw(13,15.1)--(19.5,15.1);
\draw(13,15.5)--(19.5,15.5);

\draw[thick,domain=270:450] plot ({19.5+0.5*cos(\x)}, {15+0.5*sin(\x)});
\draw[thick](19.5,15.5)--(19.5,14.5);
\draw[domain=270:450] plot ({19.5+0.1*cos(\x)}, {15+0.1*sin(\x)});
\draw[dashed,domain=270:450] plot({19.5+0.8*cos(\x)},{15+0.8*sin(\x)});
\draw[dashed] (19,15.75)--(19.5,15.75);
\draw[dashed](19,14.25)--(19.5,14.25);
\draw[dashed](19,15.75)--(19,14.25);

\draw[dashed,domain=90:270] plot({10+6*cos(\x)}, {6+6*sin(\x)});
\draw[dashed] (11,0)--(11,12);
\draw[dashed](10,12)--(11,12);
\draw[dashed](10,0)--(11,0);

\node at (4,2) {$V_m$};
\node at (7,5) {$V'_m$};
\node at (26,10) {$V_n$};
\node at (23,11){$V'_n$};
\node at (21,15){$V_i$};
\node at (18,17){$V'_i$};
\draw[->] (18.5,16.5)--(19.8,15);
\end{tikzpicture}
\caption{Sets constructed in the proof of Lemma~\ref{lem:fourth}.}
\label{semi-circ}
\end{figure}
For every $n\in \N$, the open set
$$U_n:=V_n\setminus\overline{\cup_{i>n}V_i}$$
contains $W_n$,
because otherwise there exists an increasing sequence 
$(i_k)_{k\in\N}\subset \N$ so that points $x^{i_k}\in \{T^{i_k}(c)\}\times G_{i_k}$ 
and $x:=\lim_k x^{i_k}\in W_n$. Since $x^{i_k}\in\{T^{i_k}(c)\}\times G_{i_k}$, 
the corresponding itinerary
satisfies $\tau_R(\overleftarrow{s}^{i_k})=i_k$, but because $i_k\rightarrow \infty$ as $k\rightarrow \infty$ this implies that the corresponding itinerary $\overleftarrow{s}$ of $x$ satisfies 
$\tau_R(\overleftarrow{s})=\infty$, a contradiction.

Note that $\diam(U_n)\leq\diam(V_n)\to 0$ as $n\to\infty$.
\end{proof}

Now define a continuous function $f_n\colon\R^2\to\R^2$ as 
in Lemma~\ref{lem:fourth} replacing $U$ with $U_n$ and $W$ with $W_n$.
Let $F_n:\R^2\rightarrow \R^2$ be defined as $F_n:=f_n\circ\ldots\circ f_1$ for every $n\in \N$. We need to show that 
$F := \lim_{n\rightarrow \infty} F_n$ exists and is continuous. It is enough to show the following:

\begin{lemma}
Sequence $(F_n)_{n\in \N}$ is uniformly Cauchy.
\end{lemma}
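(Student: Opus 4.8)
The plan is to exploit two structural facts already established: each $f_n$ is the identity on $\R^2\setminus U_n$, and the sets $U_n$ are pairwise disjoint with $\diam(U_n)\to 0$. The strategy is to show that, for every point, the orbit under the maps $F_n$ is in fact eventually constant, and then to bound the single ``jump'' uniformly in the point.

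First I would record a displacement bound for each individual $f_n$, namely the claim $\sup_{x\in\R^2}d_e(f_n(x),x)\le\diam(U_n)$. For $x\notin U_n$ this is immediate, since $f_n(x)=x$. For $x\in U_n$ I would invoke the explicit construction in the proof of Lemma~\ref{lem:fourth}: the model map $\hat f$ preserves the first coordinate and acts on each fibre by the injective map $g(a,\cdot)$, which sends $[-1,1]$ into itself and fixes the endpoints; hence $\hat f$ maps the interior of the model rectangle into itself, and after transport $f_n(U_n)\subseteq U_n$. Then $x$ and $f_n(x)$ both lie in $U_n$, so their distance is at most $\diam(U_n)$.

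Next I would use disjointness to analyse the whole composition. Fix $x\in\R^2$. Since the $U_i$ are pairwise disjoint, $x$ belongs to at most one of them, say $U_{i_0}$ (to none otherwise). In computing $F_m(x)=f_m\circ\cdots\circ f_1(x)$, every $f_j$ with $j<i_0$ fixes $x$ because $x\notin U_j$; then $f_{i_0}$ moves $x$ to a point $x':=f_{i_0}(x)\in U_{i_0}$; and every $f_j$ with $j>i_0$ fixes $x'$, because $x'\in U_{i_0}$ is disjoint from $U_j$. Thus $F_n(x)=x$ for $n<i_0$ and $F_n(x)=x'$ for $n\ge i_0$ (and $F_n(x)=x$ for all $n$ if $x$ lies in no $U_i$). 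Consequently, for $m>n$ the values $F_m(x)$ and $F_n(x)$ can differ only when $n<i_0\le m$, in which case
$$
d_e(F_m(x),F_n(x))=d_e(x,x')\le\diam(U_{i_0})\le\sup_{i>n}\diam(U_i).
$$
Since $\diam(U_i)\to 0$, the right-hand side is independent of $x$ and tends to $0$ as $n\to\infty$, which is exactly the assertion that $(F_n)_{n\in\N}$ is uniformly Cauchy.

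The step I expect to be the main obstacle to phrase cleanly is the containment $f_n(U_n)\subseteq U_n$, i.e.\ that $f_n$ never pushes a point of $U_n$ out into a neighbouring $U_j$. This is what the boundary-fixing property and the explicit fibrewise formula for $g$ in Lemma~\ref{lem:fourth} supply, and it is precisely the ingredient that prevents the displacements from accumulating across levels: instead of summing all the $\diam(U_i)$, each orbit jumps at most once, so the tail supremum $\sup_{i>n}\diam(U_i)$ already controls the difference $F_m-F_n$ everywhere.
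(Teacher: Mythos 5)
Your proof is correct and takes essentially the same route as the paper's: the paper's one-line argument bounds $\sup_{x\in\R^2}d_e(F_m(x),F_n(x))$ by $\max\{\diam(U_{n+1}),\ldots,\diam(U_m)\}$, which is exactly the tail-diameter bound you derive. Your write-up additionally supplies the details the paper leaves implicit --- the containment $f_n(U_n)\subseteq U_n$ from the fibrewise formula in Lemma~\ref{lem:fourth}, and the consequence, via pairwise disjointness of the $U_i$, that each orbit jumps at most once --- which is precisely why the maximum, rather than the sum of the $\diam(U_i)$, controls the difference.
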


\begin{proof}
Take $n<m\in\N$ and note that $\sup_{x\in\R^2}d_e(F_m(x), F_n(x))=\sup_{x\in\R^2}d_e(f_m\circ\ldots\circ f_{n+1}\circ F_n(x), F_n(x))<\max\{\diam(U_{n+1}), \ldots , \diam(U_{m})\} \to 0$ as $n, m\to\infty$.
\end{proof}

Denote by $Z:=Y\cup \{\textrm{semi-circles}\}\subset\R^2$. We want to argue that $F(Z)\subset\R^2$ is 
homeomorphic to $Y/\!\!\sim$. 
Since $F\colon Z\to F(Z)$ is continuous, it follows from \cite[Theorem 3.21]{Na}, that $\{F^{-1}(y) : y\in F(Z)\}$ 
is a decomposition of $Z$ homeomorphic to $F(Z)$. Note that this decomposition is exactly $Y/\!\!\sim$.

\begin{figure}[!ht]
	\centering
	\begin{tikzpicture}[scale=1.5]
	\draw[solid] (1,1) circle (1);
	\draw (-0.5,1)--(2.5,1);
	\draw (-0.5,0.5)--(2.5,0.5);
	\draw (-0.5,0.75)--(2.5,0.75);
	\draw (-0.5,0.88)--(2.5,0.88);
	\draw (-0.5,0.94)--(2.5,0.94);
	\draw (-0.5,0.96)--(2.5,0.96);
	\draw[thick] (1, 1)--(1,1.5);
	\node at (1.1,1.35){\small $A$};
	\node[circle,fill, inner sep=1] at (1,1){};
	\node at (0.9,1.1){\small $a$};
	
	\end{tikzpicture}
	\caption{Point $a = (a_0, \psi_L(L))$  is accessible.}
	\label{fig:top}
\end{figure}
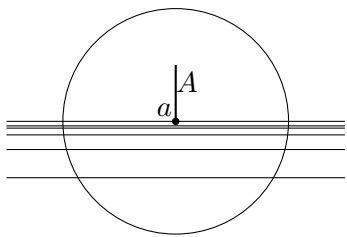

\begin{proof}[Proof of Theorem~\ref{thm:main}]
	Assume that the symbolic representation of $a=(\ldots, a_{-2}, a_{-1}, a_0)\\ \in X$ is given by $\bar{I}(a)=\ldots l_{-2}l_{-1}.l_0 l_1\ldots$. 
	Consider the planar representation $Z$ of $X$ obtained by the ordering on $C$ making $L=\ldots l_{-2}l_{-1}$ the largest. The point $a$ is represented as $(a_0, 1)$. 
	Take the arc $A=\{(a_0, t+1), t\in[0,1]\}$ which is a vertical 
	interval in the plane (see Figure~\ref{fig:top}). Note that $A\cap Z=\{a\}$. Then $F(A)$ is an arc such that $F(A)\cap F(Z)=\{F(a)\}$
	which concludes the proof. 
\end{proof}

From now onwards we denote the \emph{core inverse limit space} $\underleftarrow\lim([T^{2}(c), T(c)], T)$ by $X'$.
\begin{lemma}\label{lem:indec}
	Let $T$ be a unimodal map such that $s := \exp(h_{top}(T))> \sqrt{2}$.
	Then the core inverse limit space $X'$ is indecomposable.
\end{lemma}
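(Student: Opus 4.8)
The plan is to reduce the statement to a purely dynamical property of the \emph{core map} $f:=T|_{[T^2(c),T(c)]}$, namely that $f$ is \emph{locally eventually onto}: for every nonempty open $U\subseteq[T^2(c),T(c)]$ there is $k\in\N$ with $f^k(U)=[T^2(c),T(c)]$ (equivalently, since $f$ is onto, $f^m(U)=[T^2(c),T(c)]$ for all $m\ge k$). Once this is known, indecomposability of $X'=\underleftarrow{\lim}([T^2(c),T(c)],T)$ follows from the standard Barge--Martin criterion that the inverse limit of a locally eventually onto interval map is indecomposable. The hypothesis $s=\exp(h_{top}(T))>\sqrt2$ enters only to verify the locally-eventually-onto property.

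First I would show that $s>\sqrt2$ forces $T$ to be \emph{non-renormalizable}, i.e.\ to have no restrictive interval. Suppose to the contrary that there is an interval $J\ni c$ of period $n\ge2$, so that $J,T(J),\dots,T^{n-1}(J)$ have pairwise disjoint interiors, $T^n(J)\subseteq J$, and $T^n|_J$ is again unimodal with turning point $c$. Then it is classical that the whole topological entropy is carried by the return map, $h_{top}(T)=\frac1n h_{top}(T^n|_J)\le\frac1n\log2\le\frac12\log2=\log\sqrt2$, where the first inequality holds because a unimodal interval map has entropy at most $\log2$. This gives $s\le\sqrt2$, a contradiction, so $T$ has no restrictive interval of any period $n\ge2$. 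By the classical dichotomy for unimodal maps (see e.g.\ de Melo--van Strien), a non-renormalizable unimodal map with positive entropy is transitive on its core; and since non-renormalizability also excludes the period-two renormalization that is responsible for the only transitive-but-not-mixing behaviour of interval maps, $f$ is in fact topologically mixing, hence locally eventually onto.

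It then remains to carry out the locally-eventually-onto $\Rightarrow$ indecomposable step, which I expect to be the main obstacle. The idea is that a continuum is indecomposable as soon as every proper subcontinuum is nowhere dense: if $X'=A\cup B$ with $A,B$ proper subcontinua, then by the Baire property one of them, say $A$, has nonempty interior. So one assumes a subcontinuum $K\subseteq X'$ with nonempty interior; since the coordinates $x_{-n+1},\dots,x_0$ of a point are determined by $x_{-n}$, the basic open sets have the form $\pi_n^{-1}(U)$, and $K$ contains such a set $\pi_N^{-1}(U)$. Because $f$ is locally eventually onto, the projections $\pi_m(K)$ equal the whole core for suitable $m$, and connectedness of $K$ together with the expansion supplied by the leo property should propagate this to $K=X'$. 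The delicate point — and the reason I would ultimately invoke the Barge--Martin machinery rather than argue directly from the projections — is that a proper subcontinuum may project onto almost all of the core at every level, so the contradiction cannot be read off from the projection intervals alone; the clean way to close it is through the crookedness of the natural $\varepsilon$-chains covering $X'$, which the locally-eventually-onto property guarantees. Combining the three steps yields that $X'$ is indecomposable.
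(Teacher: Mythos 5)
There is a genuine gap, and it sits in your very first step. For a renormalizable unimodal map with restrictive interval $J$ of period $n$, it is \emph{not} true that $h_{top}(T)=\frac1n h_{top}(T^n|_J)$: the entropy is the maximum of that quantity and the entropy of the dynamics outside the cycle $J, T(J),\dots,T^{n-1}(J)$, and for $n\ge 3$ the outside dynamics typically contains a Markov subshift of positive entropy. Concretely, every unimodal map in the period-$3$ window (e.g.\ any quadratic map with an attracting $3$-cycle) is renormalizable of period $3$ yet has $h_{top}(T)=\log\frac{1+\sqrt{5}}{2}>\log\sqrt{2}$ --- the tent map of slope $\frac{1+\sqrt{5}}{2}$ has a period-$3$ critical orbit, and the window maps are semiconjugate to it. Such maps are smooth with non-flat critical point, satisfy the hypothesis of the lemma, and are \emph{not} transitive on the core (part of the core is attracted to the $3$-cycle), so they are not leo. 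Hence $s>\sqrt{2}$ does not imply non-renormalizability, the de Melo--van Strien dichotomy cannot be invoked, and your reduction to the locally-eventually-onto case fails precisely on a class of maps the lemma must cover. A second, independent problem of generality: the lemma assumes only a continuous unimodal $T$, whereas the dichotomy you cite requires smoothness and a non-flat critical point; merely continuous unimodal maps can have wandering intervals and fail transitivity even when non-renormalizable with large entropy.

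The paper's proof is designed exactly to avoid upgrading $T$ to leo. It takes the semiconjugacy $h$ from $T$ onto the tent map $T_s$ (available whenever $h_{top}(T)>0$; it is a conjugacy only in the leo case, for which indecomposability is simply quoted from Ingram--Mahavier) and uses leo-ness of $T_s$ for $s>\sqrt{2}$ to pull back a single fact: for a neighbourhood $J\ni p$ of the orientation-reversing fixed point with $h(J)$ nondegenerate, there is $N$ with $T^N(\overline{J_-})=T^N(\overline{J_+})=[T^2(c),T(c)]$ for both components $J_{\pm}$ of $J\setminus\{p\}$. Indecomposability then follows from a short projection argument, with no Barge--Martin machinery or crooked chains: if $X'=A\cup B$ with both subcontinua proper, then $X'\setminus B$ is nonempty open and contains a basic open set $\pi_m^{-1}(U)\cap X'\subseteq A$, which forces $\pi_m(B)\subseteq [T^2(c),T(c)]\setminus U$; arguing symmetrically and noting that properness of a projection persists to all higher levels, one gets a common level $n_0$ at which both $\pi_{n_0}(A)$ and $\pi_{n_0}(B)$ are proper subintervals. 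Since $\pi_{n_0+N}(A)\cup\pi_{n_0+N}(B)$ is the whole core and both are intervals, one of them contains $\overline{J_-}$ or $\overline{J_+}$, and applying $T^N$ makes its level-$n_0$ projection the whole core --- a contradiction. This also corrects the misdiagnosis in your last paragraph: proper subcontinua projecting onto the full core at every level do exist (a basic arc through the fixed point of the full tent map is one), but they cannot arise as a piece of a decomposition, because the open complement of the other piece pins down a proper projection; so the contradiction \emph{is} read off from projection intervals alone. Your instinct that leo alone would suffice is correct as a statement, but that case is the quoted one; the actual content of the lemma is the non-leo case, which your argument does not reach.
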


\begin{proof}
	The map $T$ is semiconjugate to the tent map $T_s$, and if $T$ is locally eventually onto (leo),
	then the semiconjugacy $h$ is in fact a conjugacy. In this case, $X'$ is indecomposable, see \cite{InMa}.
	We give the argument if $T$ is not leo (which also works in the general case).
	Let $p$ be the orientation reversing fixed point of $T$, so $h(p) = \frac{s}{s+1}$ is the fixed point of $T_s$.
	Let $J \ni p$ be a neighbourhood
	such that $h(J)$ is a non-degenerate neighbourhood of $\frac{s}{s+1}$.
	Since $s > \sqrt{2}$, $T_s$ is leo, so there is $N\in\N$ such that
	$T^N(\overline{J_-}) = T^N(\overline{J_+}) = [T^{2}(c), T(c)]$ for both components $J_\pm$ of $J \setminus \{ p \}$.
	Suppose now by contradiction that $X'= A \cup B$ for some proper subcontinua $A$ and $B$ of $X'$.
	Hence there exists $n_0\in \N$ such that the projections $\pi_{n_0}(A) \neq [T^{2}(c), T(c)] \neq \pi_{n_0}(B)$.
	Take $n_1 = n_0 + N$. Since $\pi_{n_1}(A)$ and $\pi_{n_1}(B)$ are intervals
	and $\pi_{n_1}(A) \cup \pi_{n_1}(B) = [T^{2}(c), T(c)]$, at least one of them, say 
	$\pi_{n_1}(A)$, contains at least one of $\overline{J_-}$ or $\overline{J_+}$.
	But then $\pi_{n_0}(A) \supset T^N(\overline{J_-}) \cap T^N(\overline{J_+}) = [T^{2}(c), T(c)]$, contradicting
	the definition of $n_0$. This completes the proof.
\end{proof}

\begin{proof}[Proof of Corollary~\ref{cor:equiv}]
	First assume that $\exp(h_{top}(T))> \sqrt{2}$.
	By Lemma~\ref{lem:indec}, $X'$ is indecomposable and thus it has uncountably many pairwise disjoint composants which are dense in $X'$. By Proposition 2 from \cite{BB} every subcontinuum $H\subset X'$ contains a ray which is dense in $H$. Therefore, every composant $\mathcal{U}$ of $X'$ contains a non-degenerate basic arc in $X'$. We embed $X$ so that this non-degenerate basic arc is the largest. Such embedding of $X$ makes a non-degenerate arc of $\mathcal{U}$ accessible.\\	
	Assume that $g_1\colon X\to E_1$ and $g_2\colon X\to E_2$ are equivalent embeddings, so there exists a homeomorphism $\tilde{h}:\mathbb{R}^2\rightarrow \mathbb{R}^2$ such that $\tilde{h}(E_1)=E_2$. 
	It was proven in \cite{BBS} that every homeomorphism $h\colon X\to X$ is pseudo-isotopic to $\sigma^R$ for some $R\in\Z$. This means that $h$ permutes the composants of $X'$ in the same way as $\sigma^R$ does. We apply this to $\tilde{h}=g_2\circ h\circ g_1^{-1}$.
	Thus if $p\in E_1$ is an accessible point, then $q:=g_2\circ h\circ g_1^{-1}(p)$ is also accessible, and $g_2\circ\sigma^{R}\circ g_1^{-1}(p)$ belongs to the same composant of $X'$ as $q$. 
	Hence $h$ maps a composant of $X'$ with an accessible non-degenerate arc to a composant of $X'$ with an accessible non-degenerate arc. 
	 Mazurkiewicz \cite{Maz} proved that every indecomposable planar continuum contains at most countably many composants with an accessible non-degenerate arc. Combining this with the fact that there are uncountably many composants in $X'$ that are not shifts of one another, we finish the proof in case when $\exp(h_{top}(T))> \sqrt{2}$. \\
	 Now assume that $\sqrt{2}\geq \exp(h_{top}(T))>1$. The core $X'$ is decomposable and there exists an indecomposable subcontinuum of $X'$ which is homeomorphic to the inverse limit space of a unimodal map with entropy greater 
	 than $\log \sqrt 2$. It follows from the arguments above that we can embed this indecomposable subcontinuum in uncountably many non-equivalent ways; therefore we obtain uncountably many non-equivalent embeddings of $X$.
\end{proof}
\begin{remark}
 Mazurkiewicz' result is perhaps too strong a tool to apply, but to complete the argument
 we need to know which points are accessible besides those in (the composant of) the
 arc which is made the largest. Frequently, this composant is indeed the only accessible
 composant, but there are many exceptions. Determining which points are accessible is a nontrivial
 matter, to which we plan to come back in a forthcoming paper. 
\end{remark}

\end{document}